\tikzset{
    invisible/.style={opacity=0},
    visible on/.style={alt={#1{}{invisible}}},
    alt/.code args={<#1>#2#3}{%
      \alt<#1>{\pgfkeysalso{#2}}{\pgfkeysalso{#3}}%
  }
}
\newtheorem{definition}{Definition}
\newtheorem{theorem}{Theorem}
\newtheorem{proposition}{Proposition}[section]
\newtheorem{corollary}[proposition]{Corollary}
\newtheorem{lemma}[proposition]{Lemma}
\theoremstyle{break} 
\newenvironment{proof}%
{{\par\noindent \bf Proof. \nobreak}}%
{\nobreak \removelastskip \nobreak \hfill $\Box$ \medbreak}
{{\par\noindent \bf Proof \nobreak}}%
{\nobreak \removelastskip \nobreak \hfill $\Box$ \medbreak}
{{\par\noindent \bf Proof lemma. \nobreak}}%
{\nobreak \removelastskip \nobreak \bf End proof lemma. \medbreak}
\newenvironment{remark}{\par \medskip \noindent {\bf Remark. }\nobreak}{\par \medskip}
\def\paragraph#1{{\bf #1\ }}
\newcommand{\expo}{\mathrm{e}}
\newcommand{\Var}{\mathrm{Var}}
\newcommand{\dd}{\mathrm{d}}
\newcommand{\HH}{\mathrm{H}}
\newcommand{\overbar}[1]{\mkern 1.5mu\overline{\mkern-1.5mu#1\mkern-1.5mu}\mkern 1.5mu}
\title{Sticky dispersion on the complete graph: a kinetic approach}
\author{Fei Cao \footnotemark[1] \and Sebastien Motsch \footnotemark[2]}
\begin{document}
\maketitle

\footnotetext[1]{University of Massachusetts Amherst - Department of Mathematics and Statistics, Amherst, MA 01003, USA}
\footnotetext[2]{Arizona State University - School of Mathematical and Statistical Sciences, 900 S Palm Walk, Tempe, AZ 85287, USA}

\tableofcontents

\begin{abstract}
We study a variant of the dispersion process on the complete graph introduced in the recent work \cite{de_dispersion_2023} under the mean-field framework. We adopt a kinetic perspective (as opposed to the probabilistic approach taken in \cite{de_dispersion_2023} and many other related works) thanks to the reinterpretation of the model in terms of a novel econophysics model. Various analytical and quantitative results regarding the large time behaviour of the mean-field dynamics are obtained and supporting numerical illustrations are provided.
\end{abstract}

\noindent {\bf Key words: Dispersion of particles; Agent-based model; Interacting particle systems; Fokker-Planck equations; Econophysics}

\section{Introduction}\label{sec:sec1}
\setcounter{equation}{0}


We study a modified version of the so-called dispersion process on the complete graph with $N$ vertices (denoted by $K_N$) introduced by Cooper, McDowell, Radzik, Rivera and Shiraga \cite{cooper_dispersion_2018} and investigated in many subsequent works \cite{de_dispersion_2023,frieze_note_2018,shang_longest_2020}. The set-up of the original dispersion process proposed in \cite{cooper_dispersion_2018,de_dispersion_2023} is as follows: Initially, $M \in \mathbb{N}_+$ indistinguishable particles are placed on a vertex of $K_N$. At the beginning of each time step, for every vertex occupied by at least two particles, each of these particles moves independently to another vertex on $K_N$ chosen uniformly at random. It is worth noting that the aforementioned process ends at the first time when each vertex host at most one particle.

In this manuscript, we plan to investigate a continuous-time variant of the aforementioned model which also embraces the effect of ``stickiness''. We first introduce some notations which will be used throughout this paper. We label the vertex set of $K_N$ from $1$ to $N$ and denote by $X_i(t)$ the number of particles inhabited by vertex $i$ at time $t \in \mathbb{R}_+$, and set \[X(t) = \left(X_1(t),\ldots,X_N(t)\right) \] to be the state vector of the dynamics. Our sticky version of the dispersion process is governed by the following dynamics: at random time (generalized by exponential law), each non-empty vertex $i$ expels a particle at the rate $R_i(t) = X_i(t)-1$ to another uniformly chosen vertex $j$. Formally, the state space is
\begin{equation}\label{eq:Omega}
\Omega = \{X \in \mathbb{N}^N \mid X_1 + \cdots + X_N = M\}.
\end{equation}

Before we dive into the mathematical analysis of the model defined above, we illustrate the intimate connection between our model to existing models in the literature through a sequence of remarks.

\begin{remark}
Notice that the original model proposed in \cite{cooper_dispersion_2018,de_dispersion_2023} can be recovered if we replace the dynamical description of our model by the following rule: at random time (generalized by exponential law), each non-empty vertex $i$ which is inhabited by at least two particles expels a particle at the rate $\tilde{R}_i(t) = X_i(t)$ to another uniformly chosen vertex $j$. Using the terminology introduced in \cite{de_dispersion_2023}, a particle is called \emph{happy} if it does not share the same vertex with other particles, otherwise it is \emph{unhappy}. Therefore, in the classical dispersion process on the complete graph $K_N$, happy particles have no motivation to make a move and each vertex which is inhabited by at least two particles expels a particle at a rate proportional to the number of particles located at that vertex. It is worth observing that happy particles also do not move in our sticky version of the classical dispersion process, and the term ``sticky'' comes from the following interpretation of the model: if we identify particles as (movable) agents/tenants and vertices as real estates/houses, and assume that the first agent (say, agent $i$ for instance) visiting an empty house/site $j$ occupies the site $j$ permanently and hence becomes a landlord, meaning that agent $i$ (``owner'' of the site $j$) will never intend to jump out of site $j$ (which he/she views as his/her own property and can only serve as a temporary accommodation for other agents), then such assumption is captured by the rate $R_j(t) = X_j(t)-1$ given in the definition of our sticky dispersion process. We illustrate our model via figure \ref{fig:sticky_dispersion} below.
\begin{figure}[!htb]
\centering
\includegraphics[width=.67\textwidth]{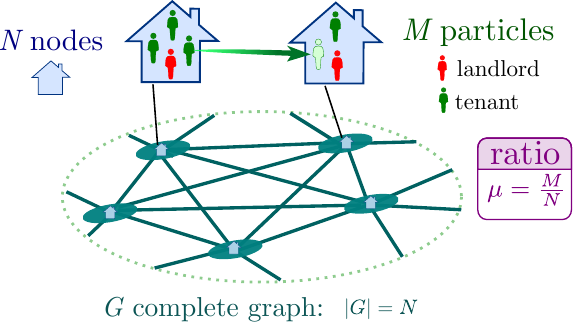}
\caption{Illustration of the sticky dispersion process on a complete graph with $N$ nodes (represented as houses) and $M$ particles (represented as tenants and landlords). The stickiness originates from the fact that landlord(s) will stay in their own houses forever and only tenants will jump across different houses.}
\label{fig:sticky_dispersion}
\end{figure}
\end{remark}

\begin{remark}
We can also reinterpret the sticky (and also the classical) dispersion process using terminologies from econophysics (which is a sub-branch of statistical physics that apply concepts and techniques of traditional physics to economics and finance \cite{dragulescu_statistical_2000,chakraborti_econophysics_2011,during_kinetic_2008,kutner_econophysics_2019,pereira_econophysics_2017,savoiu_econophysics_2013}). If we identity particles as dollars, vertices as agents, and $X_i(t)$ as the amount of dollars agent $i$ has at time $t$, then the aforementioned sticky dispersion process can be viewed as a simple dollar exchange mechanism in a closed economical system: at random time (generated by an exponential law), an agent $i$ who has at least two dollars in his/her pocket (i.e., $X_i \geq 2$) is picked at a rate proportional to $X_i - 1$, then he or she will give $1$ dollar to another agent $j$ picked uniformly at random. It is clearly from the set-up that we have
\begin{equation}\label{eq:preserved_sum}
X_1(t)+ \cdots + X_N(t) = N\mu = M \qquad \text{for all } t\geq 0
\end{equation}
as the economical system is closed, where $\mu = M \slash N$ represents the average amount of dollars per agent in this context. Mathematically, the update rule of this multi-agent system can be represented by
\begin{equation}\label{sticky_dispersion}
\textbf{sticky dispersion:} \qquad (X_i,X_j)~ \begin{tikzpicture} \draw [->,decorate,decoration={snake,amplitude=.4mm,segment length=2mm,post length=1mm}]
(0,0) -- (.6,0); \node[above,red] at (0.3,0) {\small{$X_i-1$}};\end{tikzpicture}~  (X_i-1,X_j+1) \quad (\text{if } X_i > 1).
\end{equation}
\end{remark}

We have noticed that earlier work on the classical dispersion process on complete graphs \cite{de_dispersion_2023} focuses only on the asymptotic region where the total number of particles $M$ scales no faster than $N$ (i.e., $\lim_{N \to \infty} = M \slash N \leq 1$), so that the process ends almost surely (which occurs when no particle is sharing the same vertex with other particles) so that every particle becomes happy at the (random) time $T_{K_N,M}$ termed as the \emph{dispersion time}. The main quantity under investigation in \cite{cooper_dispersion_2018,de_dispersion_2023,frieze_note_2018,shang_longest_2020} is the aforementioned dispersion time by virtue of advanced probabilistic tools. By resorting to a kinetic/mean-field approach, we intend to treat the case where $\mu = M \slash N \in (0,\infty)$ remains a positive constant of order $1$ and we also allow for general initial distributions of particles (beyond the very popular choice of putting all particles on a single vertex).

From now on, we will adopt the econophysics terminologies as mentioned in the previous remark (unless otherwise stated) and therefore identifying particles as dollars, $N$ vertices as $N$ agents, $X_i(t)$ as the amount of dollars agent $i$ has at time $t$, and distribution of particles over vertex set of $K_N$ as distribution of money among $N$ agents. We emphasize here that at least one advantage of adopting the econophysics interpretation of the model will be mentioned at the end of section \ref{subsec:sec3.1}. Our goal in this paper lies in the derivation of the limiting distribution of dollars among the agents as the total number of agents $N$ and time $t$ go to infinity, while keeping $\mu = M \slash N$ to be a fixed positive constant (which represents the average amount of dollars per agent in the system). To foresee the behavior of the dynamics under the large time and the large population limits, we perform agent-based simulations using $N = 5,000$ over $1,000,000$ steps using two different values for $\mu$ (see figure \ref{fig:agent_based_simulation}).

\begin{figure}[!htb]
  \centering
  \includegraphics[width=.97\textwidth]{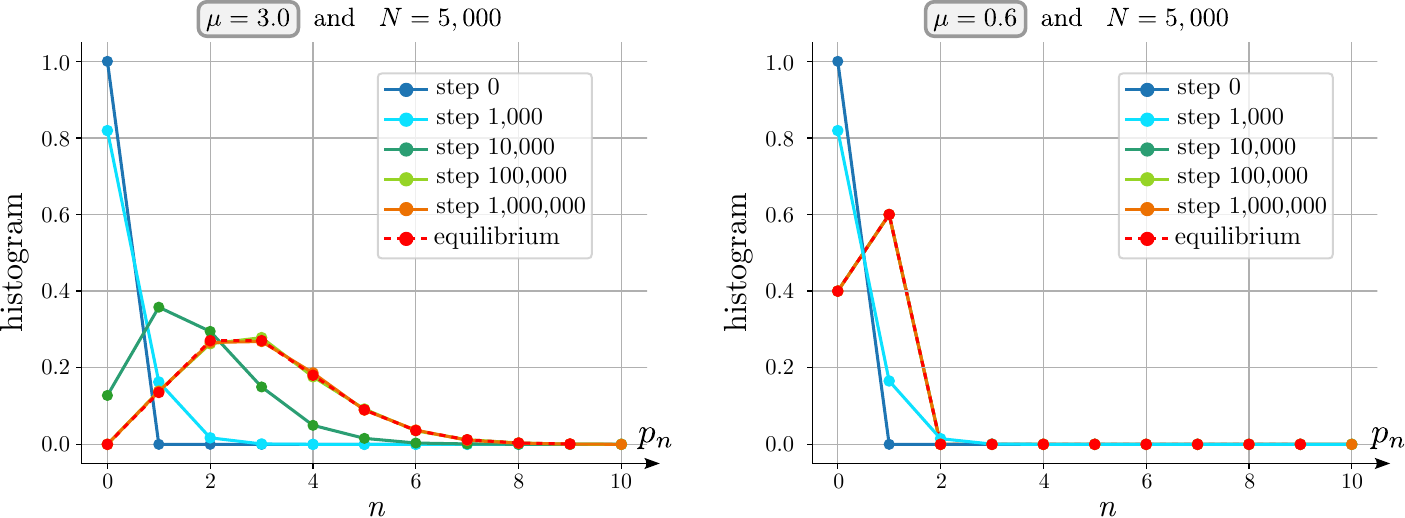}
  \caption{Distribution of money for the sticky dispersion model with $N = 5,000$ agents after $1,000,000$ steps, using two different values of $\mu$. Initially, we put all the money into the packet of a single agent (equivalently, all agents occupy the same house initially). {\bf Left:} For $\mu = 3$, the distribution approaches to a modified Poisson distribution which will be predicted by \eqref{eq:modified_Poisson} below. {\bf Right:} For $\mu = 0.6$, the distribution coincides with a Bernoulli distribution with mean $0.6$.}
  \label{fig:agent_based_simulation}
\end{figure}

We observe that the distribution of money converges to a Bernoulli distribution with mean $0.6$ for $\mu = 0.6$ (figure \ref{fig:agent_based_simulation}-right) and that it converges to a modified Poisson distribution predicated by \eqref{eq:modified_Poisson} for $\mu = 3$ (figure \ref{fig:agent_based_simulation}-left).

\subsection{Overview of main results}\label{subsec:sec1.1}

The model we have introduced is amenable to mean-field type analysis under the large population limit $N \to \infty$ which is detailed in section \ref{sec:sec2}, where the concept of \emph{propagation of chaos} \cite{sznitman_topics_1991} plays a crucial role. Bearing in mind the aim to obtain a simplified (actually fully deterministic) dynamics when we send $N \to \infty$, we consider the probability distribution function of money:
\begin{equation}
  \label{eq:p}
  {\bf p}(t)=\left(p_0(t),p_1(t),\ldots,p_n(t),\ldots\right)
\end{equation}
with $p_n(t)= \{``\text{probability that a typical agent has } n \text{ dollars at time}~ t "\}$. We will demonstrate in section \ref{sec:sec2} that evolution of ${\bf p}(t)$ is governed by the following deterministic system of nonlinear ordinary differential equations:
\begin{equation}
  \label{eq:law_limit}
  \frac{\dd}{\dd t} {\bf p}(t) = Q[{\bf p}(t)]
\end{equation}
with
\begin{equation}
  \label{eq:Q}
  Q[{\bf p}]_n = \left\{
    \begin{array}{ll}
      -\left(\sum_{k\geq 2} (k-1)\,p_k\right)\,p_0 & \quad n=0, \\
      n\,p_{n+1}+\left(\sum_{k\geq 2} (k-1)\,p_k\right)\,p_{n-1}- \left(n-1+\sum_{k\geq 2} (k-1)\,p_k\right)\,p_n, & \quad n \geq 1.
    \end{array}
  \right.
\end{equation}
The rigorous justification of this transition from the stochastic interacting agents systems \eqref{sticky_dispersion} into the corresponding mean-field ODE system \eqref{eq:law_limit}-\eqref{eq:Q} requires the proof of the \emph{propagation of chaos} property \cite{sznitman_topics_1991}, which is beyond the scope of the present manuscript and deserves a separate treatment on its own. On the other hand, propagation of chaos property has been proved for other econophysics models, see for instance \cite{cao_derivation_2021,cao_entropy_2021,cao_explicit_2021,cao_interacting_2022,cortez_uniform_2022,cortez_quantitative_2016} and we also refer interested readers to \cite{boghosian_h_2015,cao_biased_2023,cao_binomial_2022,cao_uncovering_2022,chakraborti_statistical_2000,chatterjee_pareto_2004,heinsalu_kinetic_2014,lanchier_rigorous_2017,lanchier_rigorous_2019,matthes_steady_2008} for many other interesting models in econophysics literature that we omit to describe in details.

Once the mean-field system of ODEs \eqref{eq:law_limit}-\eqref{eq:Q} associated to the multi-agent system has been identified, one natural follow-up step is to investigate the long time behaviour of the infinite dimensional ODE system \eqref{eq:law_limit}-\eqref{eq:Q} with the hope of showing convergence of its solution towards an equilibrium distribution, and we take on this task in section \ref{sec:sec3}.  Surprisingly, as will be shown in section \ref{sec:sec3}, the large time asymptotic of the solution to \eqref{eq:law_limit}-\eqref{eq:Q} depends on the value of the parameter $\mu \in (0,\infty)$ which represents the average amount of dollars each individual has initially. We prove in section \ref{subsec:sec3.1} that solutions of \eqref{eq:law_limit}-\eqref{eq:Q} converges to the following Bernoulli distribution ${\bf p}^* = \left(p^*_0,p^*_1,\ldots,p^*_n,\ldots\right)$
\begin{equation}\label{eq:Bernoulli}
p^*_0 = 1-\mu, \quad p^*_1 = \mu, \quad p^*_n = 0 \quad \text{for } n\geq 2
\end{equation}
when $\mu \in (0,1]$. Note in particular that the two-point Bernoulli distribution \eqref{eq:Bernoulli} boils down to the Dirac delta distribution $\delta_1$ centered at $1$, defined via
\begin{equation}\label{eq:Dirac_delta}
\delta_1 = (0,1,0,\ldots,0,\ldots),
\end{equation}
when $\mu = 1$. We demonstrate in section \ref{subsec:sec3.2} the convergence of solutions of \eqref{eq:law_limit}-\eqref{eq:Q} to the following modified Poisson distribution $\overbar{{\bf p}} = \left(\overbar{p}_0,\overbar{p}_1,\ldots,\overbar{p}_n,\ldots\right)$
\begin{equation}\label{eq:modified_Poisson}
\overbar{p}_0 = 0, \quad \overbar{p}_n = \frac{(\mu-1)^{n-1}}{(n-1)!}\,\expo^{-(\mu-1)} \quad \text{for } n\geq 1
\end{equation}
when $\mu > 1$. We remark here that the mathematical analysis of the large time behavior of the system \eqref{eq:law_limit}-\eqref{eq:Q} is tricker when $\mu > 1$ due to the lack of an appropriate Lyapunov functional (which decays monotonically as time progresses) associated to the system, although our analysis is significantly simplified when the initial condition ${\bf p}(0)$ is chosen such that $p_0(0) = 0$.

\section{Asymptotic nonlinear ODE system (as $N \to \infty$)}\label{sec:sec2}
\setcounter{equation}{0}

For the sake of completeness, we devote this section to a formal derivation of the mean-field system of nonlinear ODEs \eqref{eq:law_limit}-\eqref{eq:Q}, although the following heuristic argument is well-known to experts in mean-field type theory. Notice that the stochastic process $\{(X_1(t),\ldots,X_N(t)\}$ is a pure jump process with jumps of the form
\begin{equation*}
(X_1,\ldots,X_i,\ldots,X_j,\ldots,X_N) \to (X_1,\ldots,X_i\!-\!1,\ldots,X_j\!+\!1,\ldots,X_N)
\end{equation*}
with transition rate $\frac{X_i-1}{N}\,\mathbbm{1}_{[2,\infty)}\left(X_i\right)$. Therefore, taking $\phi(x_1,\ldots,x_N)$ to be a test function, the process satisfies the Kolmogorov backward equation
\begin{eqnarray}
&\frac{\dd}{\dd t} \mathbb E[\phi(X_1,\ldots,X_N)] = \frac{1}{N} \sum_{i,j=1}^N \mathbb E[\mathbbm{1}_{[2,\infty)}\left(X_i\right)(X_i\!-\!1)\phi(X_1\ldots,X_i\!-\!1,\ldots,X_j\!+\!1,\ldots,X_N)] \nonumber \\
&~-~ \mathbb E[\mathbbm{1}_{[2,\infty)}\left(X_i\right)(X_i\!-\!1)\phi(X_1,\ldots,X_N)]. \nonumber
\end{eqnarray}
Denoting by $\rho(x_1,\ldots,x_N)$ the probability distribution function associated with the process $\{(X_1,\ldots,X_N)\}$, and denote $\rho_1(x_1)$ and $\rho_2(x_1,x_2)$ as the first and the second marginal distributions of $\rho(x_1,\ldots,x_N)$, respectively. We deduce that $\rho$ satisfies the following Kolmogorov forward equation:
\begin{equation*}
\partial_t \rho(\mathbf{x}) = \frac{1}{N} \sum_{i,j=1}^N \mathbbm{1}_{[1,\infty)}\left(x_j\right)\,x_i\,\rho(\mathbf{x_{ij}}) - \mathbbm{1}_{[1,\infty)}\left(x_i\right)\,(x_i\!-\!1)\, \rho(\mathbf{x}),
\end{equation*}
with the notations $\mathbf{x}:=(x_1,\ldots,x_N)$ and $\mathbf{x_{ij}}:=(x_1,\ldots,x_i\!+\!1,\ldots,x_j\!-\!1,\ldots,x_N)$.

Now if we turn to the first marginal by supposing that the test function $\phi$ depends only on $x_1$, then
\begin{align*}
\frac{\dd}{\dd t}\, \mathbb E[\phi(X_1)] &= \mathbb E\Big[\left(\mathbbm{1}_{[2,\infty)}\left(X_1\right)(X_1\!-\!1)\phi(X_1\!-\!1)
                                      -\mathbbm{1}_{[2,\infty)}\left(X_1\right)(X_1\!-\!1)\phi(X_1) \right) \\
                                    &~~+ \frac{1}{N}\,\sum_{j=2}^N\,\left(\mathbbm{1}_{[2,\infty)}\left(X_j\right)(X_j\!-\!1)\phi(X_1\!+\!1) - \mathbbm{1}_{[2,\infty)}\left(X_j\right)(X_j\!-\!1)\phi(X_1)\right)\Big] \\
                                    &= \mathbb E\Big[\mathbbm{1}_{[2,\infty)}\left(X_1\right)(X_1\!-\!1)\phi(X_1\!-\!1) - \mathbbm{1}_{[2,\infty)}\left(X_1\right)(X_1\!-\!1)\phi(X_1) \\
                                    &~~+ \mathbbm{1}_{[2,\infty)}\left(X_2\right)(X_2\!-\!1)\phi(X_1\!+\!1) - \mathbbm{1}_{[2,\infty)}\left(X_2\right)(X_2\!-\!1)\phi(X_1)\Big],
\end{align*}
where the last identity follows since $\{X_i\}^N_{i=1}$ is interchangeable. The associated Kolmogorov forward equation now reads as
\begin{equation}\label{eq:KFE}
\begin{aligned}
  \partial_t \rho_1(x_1) &= x_1\,\rho_1(x_1\!+\!1)-\mathbbm{1}_{[1,\infty)}(x_1)(x_1\!-\!1)\rho_1(x_1) \\
  &~~ +\sum_{x_2 \geq 0} \mathbbm{1}_{[1,\infty)}(x_2)(x_2\!-\!1)\rho_2(x_1\!-\!1,x_2)\mathbbm{1}_{[1,\infty)}(x_1) - \mathbbm{1}_{[1,\infty)}(x_2)(x_2\!-\!1)\rho_2(x_1,x_2).
\end{aligned}
\end{equation}
In order to obtain a closed equation from \eqref{eq:KFE}, we assume the so-called \emph{propagation of chaos} property \cite{sznitman_topics_1991}, which implies that
\begin{equation}\label{eq:poc}
\rho_2(x_1,x_2)= \rho_1(x_1)\,\rho_1(x_2).
\end{equation}
Heuristically speaking, the propagation of chaos property \eqref{eq:poc} in the context of our model states that any pair of agents (say agent $i$ and agent $j$) picked to engage in the binary trading mechanism \eqref{sticky_dispersion} are statistically independent as $N \to \infty$. Equation \eqref{eq:poc} then allows us to deduce that
\begin{equation}\label{eq:sec2_eq1}
  \partial_t \rho_1(x_1) = x_1\,\rho_1(x_1\!+\!1)-\mathbbm{1}_{[1,\infty)}(x_1)(x_1\!-\!1)\rho_1(x_1) + \nu\,\mathbbm{1}_{[1,\infty)}(x_1)\,\rho_1(x_1\!-\!1) - \nu\,\rho_1(x_1),
\end{equation}
in which
\begin{equation}\label{eq:nu}
\nu = \sum_{x_2 \geq 1} (x_2-1)\,\rho_1(x_2).
\end{equation}
Since $x_1 \in \mathbb N$, it is readily seen that the system \eqref{eq:sec2_eq1} coincides with the mean-field ODE systems \eqref{eq:law_limit}-\eqref{eq:Q} introduced in section \ref{sec:sec1}.

\section{Large time behaviour (as $t \to \infty$)}\label{sec:sec3}
\setcounter{equation}{0}

After we achieved the transition from the interacting agents system \eqref{sticky_dispersion} to the deterministic nonlinear ODE system \eqref{eq:law_limit}-\eqref{eq:Q}, our main goal in this section is to show convergence of solution of \eqref{eq:law_limit}-\eqref{eq:Q} to its (unique) equilibrium solution. As we have indicated in the introduction, the large time behavior of solutions to \eqref{eq:law_limit}-\eqref{eq:Q} depends heavily on the range to which the parameter $\mu$ belongs. Before we dive into the detailed analysis of the system of nonlinear ODEs, we first establish some preliminary observations regarding solutions of \eqref{eq:law_limit}-\eqref{eq:Q}.
\begin{lemma}\label{lem:1}
If ${\bf p}(t)$ is a solution to the system \eqref{eq:law_limit}-\eqref{eq:Q}, then
\begin{equation}\label{eq:conservation_mass_mean_value}
\sum_{n=0}^\infty Q[{\bf p}]_n =0 \quad \text{and} \quad \sum_{n=0}^\infty n\,Q[{\bf p}]_n =0.
\end{equation}
In particular, the total probability mass and the average amount of dollars per agent are conserved.
\end{lemma}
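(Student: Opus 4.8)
The plan is to verify the two identities in \eqref{eq:conservation_mass_mean_value} by direct computation from the explicit form of the operator $Q$ in \eqref{eq:Q}, exploiting the fact that the system has a natural telescoping structure. First I would abbreviate the nonlinear coefficient by writing $\nu = \sum_{k \geq 2}(k-1)\,p_k$, which is the same quantity appearing in \eqref{eq:nu}; treating $\nu$ as a fixed scalar during the summation is legitimate because $Q[{\bf p}]_n$ is linear in the components $p_m$ once $\nu$ is frozen, and this keeps the bookkeeping transparent.

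For the first identity $\sum_{n \geq 0} Q[{\bf p}]_n = 0$, I would split the sum into three families of terms according to \eqref{eq:Q}: the gain term $n\,p_{n+1}$, the gain term $\nu\,p_{n-1}$, and the loss term $(n-1+\nu)\,p_n$ (together with the $n=0$ boundary contribution $-\nu\,p_0$). The strategy is a reindexing argument: summing $\sum_{n \geq 1} n\,p_{n+1}$ against the portion of the loss term carrying the factor $(n-1)$, and summing $\nu \sum_{n \geq 1} p_{n-1}$ against the portion carrying the factor $\nu$, each pair should cancel after shifting the index by one. I would need to keep careful track of the boundary term at $n=0$, where $Q[{\bf p}]_0 = -\nu\,p_0$, to ensure it is absorbed by the $n=1$ contribution of the $\nu\,p_{n-1}$ gain term. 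This is the standard way conservation of total mass is verified for birth-death type operators, and the cancellations are exact.

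For the second identity $\sum_{n \geq 0} n\,Q[{\bf p}]_n = 0$ I would repeat the same reindexing but now with the extra weight $n$ inside each sum. Here the telescoping is slightly more delicate because the weight does not simply shift: summing $\sum_n n \cdot n\,p_{n+1}$ and $\sum_n n\,\nu\,p_{n-1}$ against the weighted loss $\sum_n n(n-1+\nu)\,p_n$ produces, after reindexing, residual terms that must be shown to match. I expect the cleanest route is to compute $\sum_n n\,Q[{\bf p}]_n$ term-by-term, collect the coefficient of each fixed $p_m$, and verify it vanishes; the linear-in-$\nu$ part and the $\nu$-free part should cancel separately, with the $\nu$-free part reproducing the mass-conservation-like cancellation shifted by the weight.

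The main obstacle will be handling convergence and the legitimacy of rearranging these infinite sums, since the identities are only meaningful provided the relevant moments are finite. I would assume throughout (as is implicit in the model, where $\sum_n n\,p_n = \mu < \infty$ by \eqref{eq:preserved_sum}) that the solution ${\bf p}(t)$ has a finite first moment, which guarantees $\nu < \infty$ and makes the reindexing of the weighted sums absolutely convergent; under this assumption Fubini/Tonelli-type rearrangements are justified and the formal cancellations become rigorous. The final sentence of the lemma, that total mass and mean are conserved, then follows immediately by interchanging $\tfrac{\dd}{\dd t}$ with the (absolutely convergent) sums and invoking \eqref{eq:law_limit} to write $\tfrac{\dd}{\dd t}\sum_n p_n = \sum_n Q[{\bf p}]_n = 0$ and $\tfrac{\dd}{\dd t}\sum_n n\,p_n = \sum_n n\,Q[{\bf p}]_n = 0$.
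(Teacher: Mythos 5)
Your overall route --- freeze $\nu=\sum_{k\geq 2}(k-1)\,p_k$, reindex, and cancel --- is exactly the ``straightforward computation'' the paper alludes to (it skips the proof entirely), and it does lead to the result. However, one step you predict is false, and it hides the only genuinely non-trivial point of the lemma. For the weighted identity, the coefficient of a fixed $p_m$ in $\sum_{n\geq 0} n\,Q[{\bf p}]_n$ does \emph{not} vanish: collecting the four contributions from \eqref{eq:Q} gives
\begin{equation*}
(m-1)^2 - m\,(m-1) + \nu\,(m+1) - \nu\,m \;=\; \nu - (m-1) \quad \text{for } m\geq 2,
\end{equation*}
and the coefficient equals $\nu$ for $m\in\{0,1\}$. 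Summing these against $p_m$ leaves the residual $\nu\,\sum_{m\geq 0}p_m - \sum_{m\geq 2}(m-1)\,p_m = \nu\,\bigl(\sum_{m\geq 0}p_m - 1\bigr)$, which vanishes only because ${\bf p}(t)$ is normalized, $\sum_m p_m = 1$. So neither the ``$\nu$-free part'' nor the ``linear-in-$\nu$ part'' cancels on its own, and per-coefficient cancellation fails: the mean conservation law genuinely uses the mass normalization (equivalently, the first identity), whereas for $\sum_{n\geq 0} Q[{\bf p}]_n=0$ the coefficient of each $p_m$ does vanish identically with no normalization needed. Carrying out your computation would surface this immediately, but as written the plan asserts a cancellation pattern that is incorrect.

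A second, smaller inaccuracy concerns convergence. A finite first moment does \emph{not} make the termwise splitting of $\sum_{n} n\,Q[{\bf p}]_n$ absolutely convergent: the individual pieces $\sum_n n^2\,p_{n+1}$ and $\sum_n n\,(n-1)\,p_n$ are second-moment quantities, and with only $\sum_n n\,p_n<\infty$ they may both be infinite (e.g.\ put mass $4^{-k}$ at $n=2^k$), in which case the four-way split subtracts infinities and the Fubini argument you invoke does not apply. The clean fix is either to assume a finite second moment (harmless for the solutions considered in the paper) or to group the telescoping differences, e.g.\ $n^2 p_{n+1}-n(n-1)p_n = \bigl[(n+1)n\,p_{n+1}-n(n-1)p_n\bigr] - n\,p_{n+1}$, before summing, at which point the boundary term $N(N+1)\,p_{N+1}$ must be shown to vanish along the partial sums. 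Neither repair is difficult, but the proposal as stated glosses over the precise hypothesis under which the second identity in \eqref{eq:conservation_mass_mean_value} is a convergent, rearrangeable series.
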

The proof of Lemma \ref{lem:1} is based on straightforward computations and will be skipped. Thanks to these conservation relations, the solution ${\bf p}(t)$ lives in the space of probability distributions on $\mathbb N$ with the prescribed mean value $\mu$, defined by
\begin{equation}\label{eq:S_mu}
\mathcal{S}_\mu \coloneqq \{{\bf p} \mid \sum_{n=0}^\infty p_n =1,~p_n \geq 0,~\sum_{n=0}^\infty n\,p_n =\mu\}.
\end{equation}
More importantly, the system \eqref{eq:law_limit}-\eqref{eq:Q} will be equivalent to the following system of nonlinear ODEs:
\begin{equation}
  \label{eq:law_limit_repeat}
  \frac{\dd}{\dd t} {\bf p}(t) = Q[{\bf p}(t)]
\end{equation}
in which
\begin{equation}
  \label{eq:Q_rewrite}
  Q[{\bf p}]_n = \left\{
    \begin{array}{ll}
      -\left(\mu-1+p_0\right)\,p_0 & \quad n=0, \\
      n\,p_{n+1}+\left(\mu-1+p_0\right)\,p_{n-1}- (n-1)\,p_n - \left(\mu-1+p_0\right)\,p_n, & \quad n \geq 1.
    \end{array}
  \right.
\end{equation}

\begin{remark}
As is often the case, the Fokker-Planck equation \eqref{eq:law_limit_repeat}-\eqref{eq:Q_rewrite} admits a heuristic interpretation as a jump process with loss and gain, and we illustrate this point of view via figure \ref{fig:illustration_rates} below.
\begin{figure}[!htb]
\centering
\includegraphics[scale=1.0]{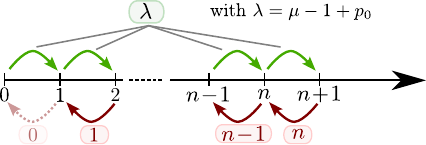}
\caption{Schematic illustration of the Fokker-Planck type system of nonlinear ODEs \eqref{eq:law_limit_repeat}-\eqref{eq:Q_rewrite}.}
\label{fig:illustration_rates}
\end{figure}
\end{remark}

Next, we identify the unique equilibrium solution associated with the system \eqref{eq:law_limit_repeat}-\eqref{eq:Q_rewrite}.

\begin{proposition}\label{prop:equilibrium}
The unique equilibrium solution of \eqref{eq:law_limit_repeat}-\eqref{eq:Q_rewrite} in the space $\mathcal{S}_\mu$, for $\mu \in (0,1]$, is given by
\begin{equation}\label{eq:Bernoulli_repeat}
p^*_0 = 1-\mu, \quad p^*_1 = \mu, \quad p^*_n = 0 \quad \text{for } n\geq 2.
\end{equation}
On the other hand, the unique equilibrium solution of \eqref{eq:law_limit_repeat}-\eqref{eq:Q_rewrite} in the space $\mathcal{S}_\mu$, when $\mu \in (1,\infty)$, is provided by
\begin{equation}\label{eq:modified_Poisson_repeat}
\overbar{p}_0 = 0, \quad \overbar{p}_n = \frac{(\mu-1)^{n-1}}{(n-1)!}\,\expo^{-(\mu-1)} \quad \text{for } n\geq 1.
\end{equation}
\end{proposition}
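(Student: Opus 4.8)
The plan is to characterize equilibria directly from $Q[{\bf p}^*]=0$ by exploiting the fact that the operator \eqref{eq:Q_rewrite} has the structure of a one–dimensional nearest–neighbour (birth–death) generator, whose stationary states are forced to satisfy detailed balance. Throughout I set $\nu := \mu-1+p_0$, and I record the basic bound that for any ${\bf p}\in\mathcal{S}_\mu$ one has $\nu=\sum_{k\geq 2}(k-1)p_k\geq 0$; this sign constraint is what ultimately pins down the threshold $\mu=1$.

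First I would reduce stationarity to detailed balance via a flux-telescoping argument. Define the net upward flux $J_n:=\nu\,p_n-n\,p_{n+1}$ for $n\geq 0$, with the convention $J_{-1}=0$. A direct inspection of \eqref{eq:Q_rewrite} shows that $Q[{\bf p}]_0=-J_0$ and $Q[{\bf p}]_n=J_{n-1}-J_n$ for $n\geq 1$. Hence the equilibrium conditions $Q[{\bf p}^*]=0$ telescope: the $n=0$ equation gives $J_0=0$, and then $J_n=J_{n-1}$ inductively for every $n\geq 1$, so $J_n\equiv 0$. This yields the detailed-balance relations
\[
\nu\,p^*_0=0, \qquad n\,p^*_{n+1}=\nu\,p^*_n \quad (n\geq 1).
\]

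Next I would solve these relations under the self-consistency constraint $\nu=\mu-1+p^*_0$, splitting according to the sign of $\nu$. If $\nu=0$, then $p^*_0=1-\mu$ and the relation $n\,p^*_{n+1}=0$ forces $p^*_n=0$ for all $n\geq 2$; normalization together with $\sum_n n\,p^*_n=\mu$ then gives $p^*_1=\mu$, i.e.\ the Bernoulli law \eqref{eq:Bernoulli_repeat}, which belongs to $\mathcal{S}_\mu$ precisely when $p^*_0=1-\mu\geq 0$, that is $\mu\leq 1$. If instead $\nu>0$, then $\nu\,p^*_0=0$ forces $p^*_0=0$, whence $\nu=\mu-1$, which is positive only when $\mu>1$; the recursion $p^*_{n+1}=(\nu/n)\,p^*_n$ gives $p^*_n=\frac{\nu^{n-1}}{(n-1)!}\,p^*_1$, and summing the shifted exponential series fixes $p^*_1=\expo^{-\nu}$. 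Substituting $\nu=\mu-1$ recovers exactly \eqref{eq:modified_Poisson_repeat}, whose mean is $1+\nu=\mu$ as required. Since $\nu\geq 0$ on $\mathcal{S}_\mu$, exactly one case survives in each regime: for $\mu\in(0,1]$ the case $\nu>0$ is excluded, leaving only the Bernoulli equilibrium (degenerating to $\delta_1$ at $\mu=1$), while for $\mu\in(1,\infty)$ the case $\nu=0$ is excluded since it would require $p^*_0=1-\mu<0$, leaving only the modified Poisson equilibrium. This establishes both existence and uniqueness.

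I do not expect a serious obstacle here: the decisive simplification is that the nonlinearity enters only through the single scalar $\nu$, so that at equilibrium the system becomes a \emph{linear} birth–death balance, solvable explicitly by the recursion above. The only point requiring genuine care is the interplay between the self-consistency identity $\nu=\mu-1+p^*_0$ and the sign bound $\nu\geq 0$ valid on $\mathcal{S}_\mu$; together these are exactly what separate the two regimes at $\mu=1$ and rule out any spurious stationary state.
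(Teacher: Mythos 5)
Your proposal is correct and takes essentially the same route as the paper: the detailed-balance relations you obtain by flux telescoping, namely $\nu\,p^*_0=0$ and $n\,p^*_{n+1}=\nu\,p^*_n$ for $n\geq 1$, are exactly the paper's equation \eqref{eq:identity}, and the subsequent resolution of the recursion with the case split at $\mu=1$ matches the paper's deduction. Your write-up simply makes explicit what the paper labels ``straightforward to check'' (the telescoping derivation) and the sign constraint $\nu\geq 0$ on $\mathcal{S}_\mu$ that separates the two regimes.
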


\begin{proof}
From the evolution equation defined by \eqref{eq:law_limit_repeat}-\eqref{eq:Q_rewrite}, it is straightforward to check that
\begin{equation}\label{eq:identity}
n\,p_{n+1} = \left(\sum_{k\geq 2} (k-1)\,p_k\right)\,p_n \quad \forall~n\geq 1,\quad \text{and} \quad \left(\sum_{k\geq 2} (k-1)\,p_k\right)\,p_0 = 0
\end{equation}
must hold at equilibrium. Thus for $\mu \leq 1$, we deduce that the unique equilibrium solution, denoted by ${\bf p}^*$, is
\begin{equation*}
p^*_0 = 1-\mu, \quad p^*_1 = \mu, \quad p^*_n = 0 \quad \text{for } n\geq 2.
\end{equation*}
On the other hand, for $\mu > 1$, we deduce from \eqref{eq:identity} that the unique equilibrium distribution, denoted by $\overbar{{\bf p}}$, is
\begin{equation*}
\overbar{p}_0 = 0, \quad \overbar{p}_n = \frac{(\mu-1)^{n-1}}{(n-1)!}\,\expo^{-(\mu-1)} \quad \text{for } n\geq 1.
\end{equation*}
We emphasize that both ${\bf p}^*$ and $\overbar{{\bf p}}$ belong to the space $\mathcal{S}_\mu$.
\end{proof}

\begin{remark}
It is worth noting that the equation governing the motion of $p_0(t)$ can be readily solved explicitly, leading us to
\begin{equation}\label{eq:p_0;case1}
p_0(t) = (1-\mu)\,\frac{p_0(0)\,\expo^{(1-\mu)\,t}}{p_0(0)\,\expo^{(1-\mu)\,t}+1-\mu-p_0(0)}
\end{equation}
for $\mu \neq 1$, and
\begin{equation}\label{eq:p_0;case2}
p_0(t) = \frac{1}{t+\frac{1}{p_0(0)}}
\end{equation}
when $\mu = 1$.
\end{remark}

As a warm-up before we dive into the analytical investigation of the nonlinear ODE system \eqref{eq:law_limit_repeat}-\eqref{eq:Q_rewrite}. We investigate numerically the convergence of ${\bf p}(t)$ to its equilibrium distribution. We use $\mu=3$ and $\mu = 0.6$ respectively. To discretize the model, we use $10,001$ components to describe the distribution ${\bf p}(t)$ (i.e., $(p_0(t),\ldots,p_{10000}(t))$). As initial condition, we use $p_{100}(0)= \frac{\mu}{100}$, $p_0(0) = 1-p_{100}(0)$ and $p_i(0)=0$ for $i \notin \{0,100\}$. The standard Runge-Kutta fourth-order scheme is used to discretize the ODE system \eqref{eq:law_limit_repeat}-\eqref{eq:Q_rewrite} with  the time step $\Delta t=0.001$.
We plot in figure \ref{fig:conv_to_equilibrium}-left and \ref{fig:conv_to_equilibrium}-right the evolution of the numerical solution ${\bf p}(t)$ at different times corresponding to $\mu=3$ and $\mu = 0.6$ respectively. It can be observed that convergence to equilibrium occur in both cases.

\begin{figure}[!htb]
  \centering
  \includegraphics[width=.97\textwidth]{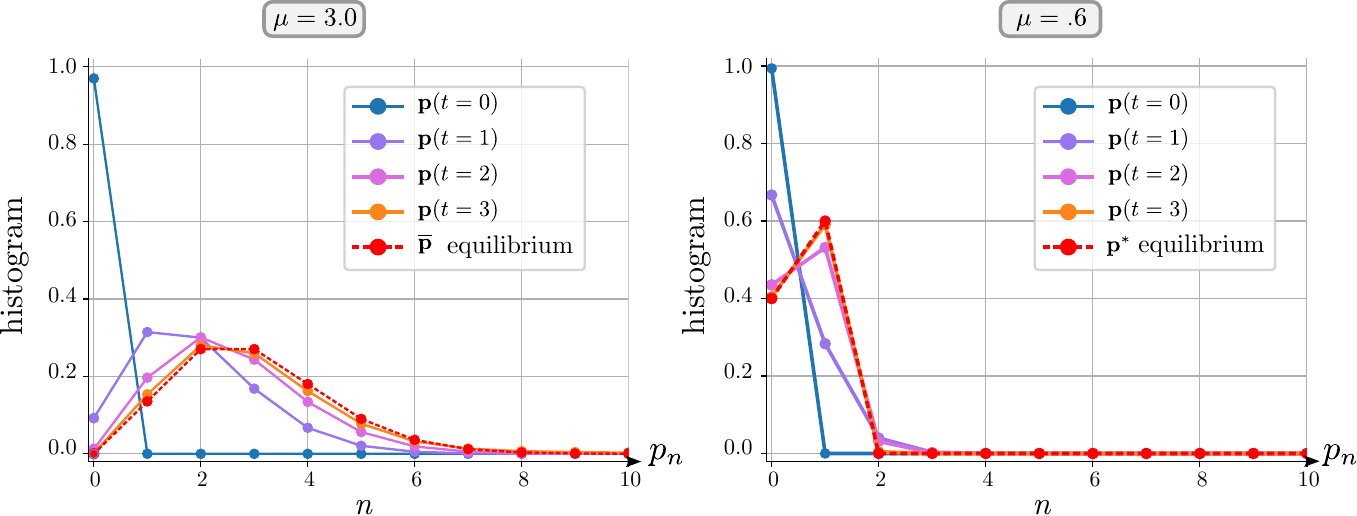}
  \caption{{\bf Left}: comparison between the numerical solution ${\bf p}(t)$ of the ODE system \eqref{eq:law_limit_repeat}-\eqref{eq:Q_rewrite} and the equilibrium $\overbar{{\bf p}}$ \eqref{eq:modified_Poisson} when $\mu = 3$. {\bf Right}: comparison between the numerical solution ${\bf p}(t)$ of the ODE system \eqref{eq:law_limit_repeat}-\eqref{eq:Q_rewrite} and the equilibrium ${\bf p}^*$ \eqref{eq:Bernoulli} when $\mu = 0.6$.}
  \label{fig:conv_to_equilibrium}
\end{figure}

\begin{remark}
The modified Poisson distribution $\overbar{{\bf p}}$ admits a simple interpretation in terms of random variables. Indeed, if $Y \sim \textrm{Possion}(\mu-1)$, we can easily deduce that $X \coloneqq 1 + Y$ is distributed according to the law $\overbar{{\bf p}}$.
\end{remark}

\subsection{Convergence to Bernoulli distribution for $\mu \leq 1$}\label{subsec:sec3.1}

To demonstrate the large time convergence of solutions of the system \eqref{eq:law_limit_repeat}-\eqref{eq:Q_rewrite} when $\mu \in (0,1]$, we rely on the construction of a suitable Lyapunov functional associated to the dynamics \eqref{eq:law_limit_repeat}-\eqref{eq:Q_rewrite}. We now recall the definition of Gini index.

\begin{definition}[\textbf{Gini index}]
For a given probability distribution function ${\bf p} \in \mathcal{P}(\mathbb N)$ with mean $\mu \in \mathbb{R}_+$, the Gini index of the distribution ${\bf p}$ is defined via
\begin{equation}\label{def1:Gini}
G[{\bf p}] = \frac{1}{2\,\mu} \sum\limits_{i\in \mathbb N}\sum\limits_{j \in \mathbb N} |i-j|\,p_i\,p_j
\end{equation}
and takes its value in $[0,1]$.
\end{definition}

The Gini index $G$ is a frequently encountered inequality indicator which measures the inequality of a wealth distribution and ranges from $0$
(for a wealth-egalitarian society) to $1$ (as an ever-vanishing proportion of the population holds all the wealth, leading to extreme inequality). We claim that the Gini index serves as a Lyapunov functional, after some finite (and explicitly computable) time, for the evolution system \eqref{eq:law_limit_repeat}-\eqref{eq:Q_rewrite} when $\mu \in (0,1]$. Due to the dependence of the expression for $p_0(t)$ on $\mu$, we split the discussion into two separate cases for ease of presentation. We first investigate the large time behavior of solutions of \eqref{eq:law_limit_repeat}-\eqref{eq:Q_rewrite} when $\mu = 1$, whence the system \eqref{eq:law_limit_repeat}-\eqref{eq:Q_rewrite} reads as
\begin{equation}
  \label{eq:law_limit_mu=1}
  p'_n(t)  = \left\{
    \begin{array}{ll}
      -p^2_0 & \quad n=0, \\
      n\,p_{n+1}+p_0\,p_{n-1}- (n-1)\,p_n - p_0\,p_n, & \quad n \geq 1.
    \end{array}
  \right.
\end{equation}
For notational simplicity, we also write $F_n \coloneqq \sum_{i\leq n} p_i$ for all $n \in \mathbb{N}$ with the convention that $F_{-1}=0$. We now prove the following sharp result for the behavior of the Gini index.
\begin{theorem}\label{thm:1}
Assume that ${\bf p}(t)$ is a solution of the system \eqref{eq:law_limit_mu=1} with ${\bf p}(0) \in \mathcal{S}_1$, then there exists some positive constants $C_1$ and $C_2$ such that
\begin{equation}\label{eq:Gini_decay_1}
\frac{C_1}{t} \leq G[{\bf p}(t)] \leq \frac{C_2}{t}
\end{equation}
for all $t\geq t_* \coloneqq \min\{s \geq 0 \mid p_0(s) \leq \frac{1}{6}\}$. In other words, the decay rate of the Gini index $G[{\bf p}(t)]$ to $G[\delta_1] = 0$ is of order $1\slash t$ and this rate is (asymptotically) sharp.
\end{theorem}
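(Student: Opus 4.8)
The plan is to reduce the two-sided estimate on $G[{\bf p}(t)]$ to a two-sided estimate on the single coordinate $p_0(t)$, for which the explicit formula \eqref{eq:p_0;case2} is already available. The first step is to rewrite the Gini index through the cumulative distribution function $F_n = \sum_{i\le n} p_i$. Starting from the elementary identity $|i-j| = \sum_{n\ge 0}\big(\mathbbm{1}_{\{i\le n<j\}}+\mathbbm{1}_{\{j\le n<i\}}\big)$ and summing against $p_i p_j$ gives $\sum_{i,j}|i-j|\,p_i p_j = 2\sum_{n\ge0}F_n(1-F_n)$, so that for $\mu=1$,
\begin{equation*}
G[{\bf p}(t)] = \sum_{n=0}^\infty F_n(1-F_n).
\end{equation*}
Since ${\bf p}(t)\in\mathcal S_1$ for all $t$ by Lemma \ref{lem:1}, the mean identity $\sum_{n\ge0}(1-F_n)=\sum_{n\ge0}\mathbb P(X>n)=\mu=1$ holds, which yields the key relation $\sum_{n\ge1}(1-F_n)=F_0=p_0$.

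Next I would sandwich $G$ between multiples of $p_0(t)$. For the upper bound, split off the $n=0$ term and use $0\le F_n\le1$ together with the relation above:
\begin{equation*}
G = p_0(1-p_0)+\sum_{n\ge1}F_n(1-F_n) \le p_0 + \sum_{n\ge1}(1-F_n) = 2\,p_0.
\end{equation*}
For the lower bound, simply retain the $n=0$ term, $G\ge F_0(1-F_0)=p_0(1-p_0)$; once $t\ge t_*$ we have $p_0(t)\le\tfrac16$, hence $1-p_0\ge\tfrac56$ and $G\ge\tfrac56\,p_0$. This gives $\tfrac56\,p_0(t)\le G[{\bf p}(t)]\le 2\,p_0(t)$ for all $t\ge t_*$, which is the crux of the argument: $G$ is comparable to $p_0$ up to absolute constants.

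Finally I would substitute the closed form $p_0(t)=\big(t+p_0(0)^{-1}\big)^{-1}$ from \eqref{eq:p_0;case2}. The upper bound follows at once, $G[{\bf p}(t)]\le 2\,p_0(t)\le 2/t$, so $C_2=2$ works for every $t>0$. For the lower bound, note that $t\,p_0(t)=t\big(t+p_0(0)^{-1}\big)^{-1}$ is increasing in $t$, so for $t\ge t_*$ it is at least $t_*\,p_0(t_*)=t_*/6$ (using $p_0(t_*)=\tfrac16$); therefore $G[{\bf p}(t)]\ge\tfrac56\,p_0(t)\ge\tfrac{5t_*}{36}\cdot\tfrac1t$, and one may take $C_1=\tfrac{5t_*}{36}$. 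The matching $1/t$ orders in the two bounds are exactly the asserted sharpness; here $C_1$ depends on the initial datum through $t_*$ while $C_2$ is universal.

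I expect the main obstacle to be essentially conceptual rather than computational: recognizing the CDF representation of the Gini index and, above all, exploiting the mean-conservation identity $\sum_{n\ge1}(1-F_n)=p_0$, which is what forces the upper bound to inherit the same $p_0$-scaling as the trivial lower bound. Without the constraint $\mu=1$ (equivalently, conservation of mass and mean from Lemma \ref{lem:1}) the upper bound would not close at order $p_0$. Once the reduction $G\asymp p_0$ is in hand, the explicit solution for $p_0(t)$ makes everything elementary.
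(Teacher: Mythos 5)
Your proposal is correct, but it takes a genuinely different route from the paper. The paper argues dynamically: it differentiates $G[{\bf p}(t)]$ along the flow, uses Abel summation to reach the identity \eqref{eq:Gini_derivative_identity}, and then closes two differential inequalities --- a Riccati-type bound $\frac{\dd}{\dd t}G \leq -\tfrac19\,G^2$ for the upper estimate (this is what \eqref{eq:Gini_differential_inequality1} must mean, since the Gronwall conclusion \eqref{eq:Gini_upper_bound1} is of $1/t$ type rather than exponential), and the integrated bound $-\frac{\dd}{\dd t}G \geq C/t^2$ over $(t,\infty)$ for the lower estimate. You instead prove a purely \emph{static} two-sided comparison valid for any ${\bf p}\in\mathcal{S}_1$ with $p_0\leq\tfrac16$: the CDF representation $G=\sum_{n\geq0}F_n(1-F_n)$ combined with the mean identity $\sum_{n\geq1}(1-F_n)=F_0$ gives $\tfrac56\,p_0 \leq G \leq 2\,p_0$, after which the explicit solution \eqref{eq:p_0;case2} finishes the proof. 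Your route buys explicit constants ($C_2=2$), a bound sharper than the paper's observation \eqref{eq:obser2} ($2p_0$ versus $3p_0$), and in fact a stronger conclusion --- $G[{\bf p}(t)]$ is pointwise comparable to $p_0(t)=\left(t+1/p_0(0)\right)^{-1}$ --- while touching the dynamics only through the conservation laws of Lemma \ref{lem:1} and the scalar ODE $p_0'=-p_0^2$. What the paper's heavier computation buys is the Lyapunov-functional picture itself (eventual monotone decay of $G$) and, more importantly, the derivative identity \eqref{eq:Gini_derivative_identity}, whose generalization \eqref{eq:Gini_derivative_identity2} is the engine of Theorem \ref{thm:2} for $\mu<1$, where no closed-form comparison of your type is immediately available. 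One caveat you share with the paper: your $C_1=\tfrac{5t_*}{36}$ vanishes when $t_*=0$ (i.e.\ when $p_0(0)\leq\tfrac16$), and in that degenerate case no positive $C_1$ can work down to $t=t_*$ since $G$ is bounded; the same defect afflicts the constant $C$ in \eqref{eq:b1}, so the lower bound should be read with $t_*>0$ implicitly assumed or as an asymptotic statement in both arguments.
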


\begin{proof}
We start with certain important preliminary observations by fixing ${\bf p} \in \mathcal{S}_1$ and showing that
\begin{equation}\label{eq:obser1}
F_1 \geq 1 - F_0 = 1 - p_0
\end{equation}
and
\begin{equation}\label{eq:obser2}
G[{\bf p}] \leq 3\,F_0 = 3\,p_0.
\end{equation}
Indeed, inequality \eqref{eq:obser1} is a consequence of the following chain of relations:
\[1 = F_1 - p_0 + \sum_{n\geq 2} n\,p_n \geq F_1 - p_0 + 2\,\sum_{n\geq 2} p_n = F_1 - p_0 + 2\,(1-F_1) = 2 - p_0 - F_1. \]
Equivalently, $p_1 \geq 1 - 2\,F_0 = 1 - 2\,p_0$. The derivation of \eqref{eq:obser2} follows from
\begin{equation*}
\begin{aligned}
G[{\bf p}] &= \frac 12 \sum\limits_{i\geq 0}\sum\limits_{j\geq 0} |i-j|\,p_i\,p_j = \sum\limits_{i\geq 0} p_i\,\sum\limits_{j\leq i} p_j \\
&= \sum\limits_{i\geq 0} i\,p_i\,F_i - \sum\limits_{i\geq 0} p_i\,\sum\limits_{j\leq i} j\,p_j = \sum\limits_{i\geq 0} i\,p_i\,F_i - \sum\limits_{j\geq 0} j\,p_j\,\sum\limits_{i\geq j} p_i \\
&\leq \sum\limits_{i\geq 0} i\,p_i - p_1\,(1-F_0) \leq 1-p_1\,(1-F_0) \\
&\leq 1 - (1-2\,F_0)\,(1-F_0) = 3\,F_0 - 2\,F^2_0 \leq 3\,F_0.
\end{aligned}
\end{equation*}
We now compute the evolution of $G[{\bf p}(t)]$ along the dynamics prescribed by the system \eqref{eq:law_limit_mu=1}, leading us to
\begin{align*}
\frac{\dd}{\dd t} G[{\bf p}] &= \sum\limits_{i\geq 0}\sum\limits_{j\geq 0}|i-j|\,p'_i\,p_j \\
&= \sum\limits_{i\geq 0}\sum\limits_{j\geq 0} |i-j|\,\left(i\,p_{i+1}+p_0\,p_{i-1}\,\mathbbm{1}\{i\geq 1\}- (i-1)\,p_i\,\mathbbm{1}\{i\geq 1\} - p_0\,p_i\right)\,p_j \\
&= \sum\limits_{i\geq 0}\sum\limits_{j\geq 0} |i-j|\,i\,p_{i+1}\,p_j + p_0\,\sum\limits_{i\geq 1}\sum\limits_{j\geq 0} |i-j|\,p_{i-1}\,p_j \\
&\qquad - \sum\limits_{i\geq 1}\sum\limits_{j\geq 0} |i-j|\,(i-1)\,p_i\,p_j - p_0\,\sum\limits_{i\geq 0}\sum\limits_{j\geq 0} |i-j|\,p_i\,p_j \\
&= \sum\limits_{i\geq 0}\sum\limits_{j\geq 0} (\underbrace{p_0\,p_i - i\,p_{i+1}}_{\coloneqq A_i})\,(|i+1-j|-|i-j|)\,p_j.
\end{align*}
Note that $\sum_{i\geq 0} A_i = p_0 - \sum_{i\geq 0} i\,p_{i+1} = 0$, we deduce that
\begin{align*}
\frac{\dd}{\dd t} G[{\bf p}] &= \sum\limits_{i\geq 0}\left(\sum\limits_{j\leq i} A_i\,p_j - \sum\limits_{j > i} A_i\,p_j \right) \\
&= \sum\limits_{i\geq 0} A_i\,\left(\sum\limits_{j\leq i} p_j - \sum\limits_{j > i} p_j\right) = 2\,\sum\limits_{i\geq 0} (p_0\,p_i - i\,p_{i+1})\,\sum\limits_{j\leq i} p_j \\
&= 2\,\sum\limits_{i\geq 0} \left(F_0\,(F_i-F_{i-1})-i\,(F_{i+1}-F_i)\right)\,F_i.
\end{align*}
Thanks to Abel's summation by parts formula, we have \[\sum\limits_{i\geq 0} (F_i-F_{i-1})\,F_i = F^2_0 + \sum\limits_{i\geq 1} (F_i-F_{i-1})\,F_i = F^2_0 + 1 - F^2_0 - \sum\limits_{i\geq 0} (F_{i+1}-F_i)\,F_i.\] Therefore, we obtain
\begin{equation}\label{eq:Gini_derivative_identity}
\frac 12\,\frac{\dd}{\dd t} G[{\bf p}] = F_0 - \sum\limits_{i\geq 0} (F_{i+1}-F_i)\,F_i\,(F_0+i).
\end{equation}
We observe that \[\sum\limits_{i\geq 0} i\,F_i\,(F_{i+1}-F_i) \geq F_1\,\sum\limits_{i\geq 0} i\,(F_{i+1}-F_i) = F_1\,\sum\limits_{i\geq 0} i\,p_{i+1} = F_1\,F_0 \] and that
\begin{align*}
\sum\limits_{i\geq 0} (F_{i+1}-F_i)\,F_i &\geq F_0\,(F_1-F_0)+F_1\,\sum\limits_{i\geq 0} (F_{i+1}-F_i) \\
&= F_0\,(F_1-F_0)+F_1\,(1-F_1) = F_0\,F_1 - F^2_0 + F_1 - F^2_1.
\end{align*}
Thus we deduce from the identity \eqref{eq:Gini_derivative_identity} the following differential inequality:
\begin{equation}\label{eq:Gini_derivative_inequality}
\frac 12\,\frac{\dd}{\dd t} G[{\bf p}] \leq F_0\,\left(1-(2\,F_1-F^2_1+F_0\,F_1-F^2_0)\right) = -F_0\,\left(F_0\,F_1-F^2_0-(1-F_1)^2\right).
\end{equation}
Notice that \[F_0\,F_1-F^2_0-(1-F_1)^2 \geq F_0\,(1-F_0) - F^2_0 - F^2_0 = F_0\,(1-3\,F_0)\] thanks to \eqref{eq:obser1}, from which it follows that \[\frac 12\,\frac{\dd}{\dd t} G[{\bf p}(t)] \leq -\frac{F^2_0(t)}{2}\] for all $t \geq t_* \coloneqq \min\{s \geq 0 \mid p_0(s) \leq \frac{1}{6}\}$. Taking into account of the observation \eqref{eq:obser2}, we arrive at
\begin{equation}\label{eq:Gini_differential_inequality1}
\frac{\dd}{\dd t} G[{\bf p}(t)] \leq -\frac{1}{9}\,G[{\bf p}(t)] \quad \forall~t \geq t_*.
\end{equation}
Thus Gronwall's inequality leads us to
\begin{equation}\label{eq:Gini_upper_bound1}
G[{\bf p}(t)] \leq \frac{9}{t-t_*+\frac{9}{G[{\bf p}(t_*)]}} \quad \forall~t \geq t_*.
\end{equation}
On the other hand, the explicit expression \eqref{eq:p_0;case2} for $p_0(t)$ guarantees the existence of some constant $C > 0$ such that
\begin{equation}\label{eq:b1}
\frac 12\,\frac{\dd}{\dd t} G[{\bf p}(t)] \leq -\frac{F^2_0(t)}{2} \leq -\frac{C}{t^2} \quad \forall~t \geq t_*.
\end{equation}
Upon integrating the relation \eqref{eq:b1} we obtain
\begin{equation}\label{eq:Gini_lower_bound1}
G[{\bf p}(t)] \geq -\int_t^{\infty} \frac{\dd}{\dd s} G[{\bf p}(s)]\,\dd s \geq \int_t^{\infty} \frac{C}{s^2}\,\dd s = \frac{C}{t} \quad \forall~t \geq t_*.
\end{equation}
Combining the bounds \eqref{eq:Gini_upper_bound1} and \eqref{eq:Gini_lower_bound1} yields the claimed result \eqref{eq:Gini_decay_1}.
\end{proof}

To illustrate our quantitative convergence guarantee \eqref{eq:Gini_decay_1} of the Gini index we plot the evolution of the Gini index $G[{\bf p}(t)]$ over time (see figure \ref{fig:cv_gini_mu=1}) when $\mu = 1$, using the same set-up (i.e., model parameters) as before in figure \ref{fig:conv_to_equilibrium}. Notice that the decay of $G[{\bf p}(t)]$ behaves like $\mathcal{O}(1/t)$ when time is moderately large.

\begin{figure}[!htb]
\centering
\includegraphics[width=.97\textwidth]{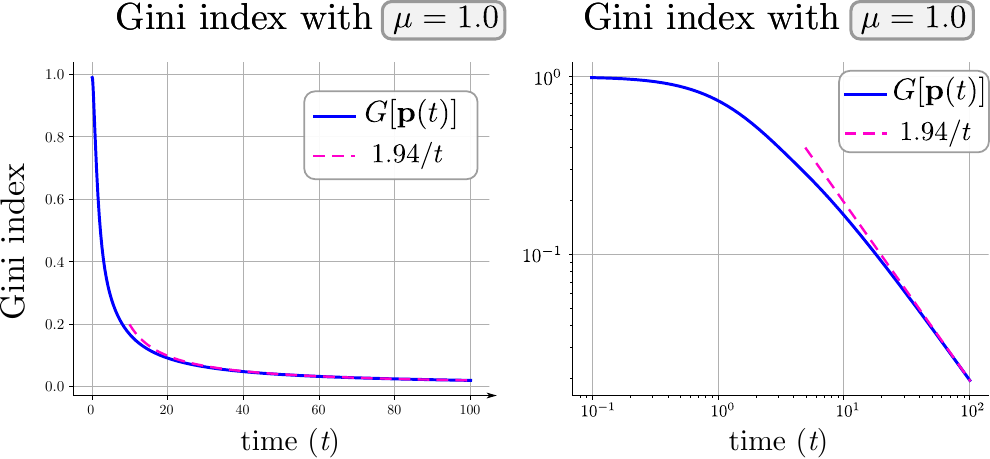}
\caption{{\bf Left}: Decay of the Gini index $G[{\bf p}(t)]$ with respect to time. {\bf Right}: Decay of the Gini index $G[{\bf p}(t)]$ in log-log scale. The decay is inversely proportional to time as predicted by the Theorem \ref{thm:1}.}
\label{fig:cv_gini_mu=1}
\end{figure}

\begin{remark}
In the case where $p_0(0)=1$, we readily deduce from \eqref{eq:obser1} that $p_1(0) \geq 1 - 2\,p_0(0) = 1$, whence $p_1(0) = 1$ and $p_n(0) = 0$ for all $n\geq 2$. Consequently, ${\bf p}(0)$ already coincides with the equilibrium Dirac delta distribution $\delta_1$ \eqref{eq:Dirac_delta}.
\end{remark}

\begin{remark}
It is worth emphasizing that we can not hope for a stronger monotonicity property of the Gini index for all time, i.e., $\frac{\dd}{\dd t} G[{\bf p}(t)] \leq 0$ for all $t \geq 0$. Indeed, if we take ${\bf p}(0) \in \mathcal{S}_1$ such that $p_0 = p_2 = 1$ and $p_n = 0$ for all $n \in \mathbb N \setminus \{0,2\}$, then the identity \eqref{eq:Gini_derivative_identity} for the time derivative of the Gini index along solutions of \eqref{eq:law_limit_mu=1} yields that
\begin{equation}\label{eq:example1}
\frac 12\,\frac{\dd}{\dd t} G[{\bf p}]\Big|_{t=0} = \frac 12 - \frac 12\cdot \frac 12\cdot \frac 32 = \frac 18 > 0.
\end{equation}
\end{remark}

Next, we turn our attention to the large time behavior of solutions of \eqref{eq:law_limit_repeat}-\eqref{eq:Q_rewrite} when $\mu \in (0,1)$. We will prove that the Gini index \eqref{def1:Gini} still serves as an appropriate Lyapunov functional. Before we state the main result, we collect several preliminary observations.

\begin{lemma}\label{lem:observations}
Assume that $\mu \in (0,1)$ and ${\bf p} \in \mathcal{S}_\mu$ \eqref{eq:S_mu}. Then we have
\begin{equation}\label{eq:obser3}
F_1 \geq 2-\mu - p_0,
\end{equation}
and \begin{equation}\label{eq:obser4}
0 \leq G[{\bf p}] - (1-\mu) = G[{\bf p}] - G[{\bf p}^*] \leq 3\,(\mu-1+F_0).
\end{equation}
We remark here that \eqref{eq:obser3} and \eqref{eq:obser4} generalize the previous observations \eqref{eq:obser1} and \eqref{eq:obser2}, respectively.
\end{lemma}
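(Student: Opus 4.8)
The plan is to generalize the two elementary estimates \eqref{eq:obser1} and \eqref{eq:obser2} from the proof of Theorem \ref{thm:1}, replacing the mean value $1$ by the general mean $\mu \in (0,1)$. I would obtain \eqref{eq:obser3} by the very same computation that produced \eqref{eq:obser1}: starting from the mean constraint and discarding the mass above level $1$,
\begin{equation*}
\mu = \sum_{n\geq 0} n\,p_n = p_1 + \sum_{n\geq 2} n\,p_n \geq p_1 + 2\sum_{n\geq 2} p_n = (F_1 - p_0) + 2\,(1-F_1) = 2 - p_0 - F_1,
\end{equation*}
so that rearranging gives $F_1 \geq 2 - \mu - p_0$ (equivalently $p_1 \geq 2 - \mu - 2F_0$). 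This collapses to \eqref{eq:obser1} when $\mu = 1$.

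For \eqref{eq:obser4} I would first record two auxiliary facts. The value $G[{\bf p}^*] = 1-\mu$ is a one-line computation from \eqref{def1:Gini} applied to the Bernoulli equilibrium \eqref{eq:Bernoulli_repeat}, which identifies the two expressions in the statement. The companion bound $p_0 \geq 1-\mu$ follows from $\mu = \sum_{n\geq 1} n\,p_n \geq \sum_{n\geq 1} p_n = 1 - p_0$. The lower bound in \eqref{eq:obser4} is then immediate: symmetrizing $|i-j|$ exactly as in the derivation of \eqref{eq:obser2} gives $G[{\bf p}] = \tfrac{1}{\mu}\sum_{i\geq 0}\sum_{j\leq i}(i-j)\,p_i\,p_j$, and retaining only the (nonnegative) terms with $j=0$ yields $G[{\bf p}] \geq \tfrac{1}{\mu}\sum_{i\geq 0} i\,p_i\,p_0 = p_0 \geq 1-\mu$, whence $G[{\bf p}] - (1-\mu) \geq 0$.

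The upper bound is where the real work lies, and I would mimic the chain used for \eqref{eq:obser2}. Using the exact representation $G[{\bf p}] = \tfrac{1}{\mu}\big(\sum_{i\geq 0} i\,p_i\,F_i - \sum_{j\geq 0} j\,p_j\sum_{i\geq j}p_i\big)$, the crude bounds $\sum_i i\,p_i\,F_i \leq \sum_i i\,p_i = \mu$ (since $F_i\leq 1$) and $\sum_j j\,p_j\sum_{i\geq j}p_i \geq p_1\,(1-F_0)$ (keeping only the $j=1$ slice) give
\begin{equation*}
G[{\bf p}] \leq 1 - \frac{p_1\,(1-F_0)}{\mu}.
\end{equation*}
Inserting the lower bound $p_1 \geq 2-\mu-2F_0$ from \eqref{eq:obser3}, which is legitimate because $1-F_0 \geq 0$, leaves a quadratic inequality in $F_0$ to verify. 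I expect the main (and essentially only) obstacle to be purely algebraic: one must show that the claimed right-hand side minus the displayed bound is nonnegative. After clearing the factor $\mu$, this difference reduces to the perfect square $2\,(F_0 - (1-\mu))^2 \geq 0$, which simultaneously proves $G[{\bf p}] - (1-\mu) \leq 3\,(\mu-1+F_0)$ and shows the estimate is saturated exactly at the equilibrium value $F_0 = 1-\mu$. As a final consistency check, each inequality above degenerates to \eqref{eq:obser2} when $\mu=1$.
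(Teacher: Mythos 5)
Your proposal is correct and follows essentially the same route as the paper's proof: the same mean-constraint computation for \eqref{eq:obser3}, the same $j=0$ truncation of the double sum for the lower bound $G[{\bf p}] \geq p_0 \geq 1-\mu$, and the same intermediate estimate $G[{\bf p}] \leq 1 - \frac{p_1\,(1-F_0)}{\mu}$ combined with $p_1 \geq 2-\mu-2\,F_0$ for the upper bound. The only (harmless) difference is in the closing algebra: the paper chains two further estimates (bounding $1-\frac{p_1}{\mu}$ and then using $1-F_0 \leq \mu$), whereas you substitute directly and verify that the discrepancy, after clearing $\mu$, is the perfect square $2\,(F_0-(1-\mu))^2 \geq 0$ --- which indeed checks out.
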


\begin{proof}
First, we note that $p_0 \geq 1-\mu$ and the Bernoulli distribution ${\bf p}^*$ \eqref{eq:Bernoulli} has its Gini index equals to $(1-\mu)$ (i.e., $G[{\bf p}^*] = 1-\mu$). As
\[\mu = F_1 - p_0 + \sum_{n\geq 2} n\,p_n \geq F_1 - p_0 + 2\,\sum_{n\geq 2} p_n = F_1 - p_0 + 2\,(1-F_1) = 2 - p_0 - F_1,\]
it follows that $F_1 \geq 2 - \mu - p_0$ or equivalently $p_1 + 2\,p_0 \geq 2-\mu$. The inequality $G[{\bf p}] - (1-\mu) \geq 0$ is a consequence of the following observation:
\begin{equation*}
G[{\bf p}] = \frac{1}{2\,\mu} \sum\limits_{i\in \mathbb N}\sum\limits_{j \in \mathbb N} |i-j|\,p_i\,p_j \geq \frac{1}{\mu}\,\sum\limits_{j \in \mathbb N} p_0\,j\,p_j = p_0\geq 1-\mu.
\end{equation*}
Next, we recall from the proof of Theorem \ref{thm:1} that \[G[{\bf p}] = \frac{1}{\mu}\,\sum\limits_{i\geq 0} i\,p_i\,F_i - \sum\limits_{j\geq 0} j\,p_j\,\sum\limits_{i\geq j} p_i.\] Since \[\sum_{i\geq 0} i\,p_i\,F_i \leq \sum\limits_{i\geq 0} i\,p_i = \mu \quad \text{and} \quad \sum\limits_{j\geq 0} j\,p_j\,\sum\limits_{i\geq j} p_i \geq p_1\,(1-F_0),\] it follows that
\begin{equation}\label{eq:e1}
G[{\bf p}] \leq \frac{1}{\mu}\,\left(\mu-p_1\,(1-F_0)\right)
\end{equation}
Thanks to the bound \eqref{eq:obser3}, we also obtain
\begin{equation}\label{eq:e2}
1-\frac{p_1}{\mu} \leq 1 - \frac{2-\mu-2\,p_0}{\mu} = 1-\frac{2\,(1-F_0)-\mu}{\mu} = \frac{2\,(\mu-1+F_0)}{\mu}.
\end{equation}
Assembling \eqref{eq:e1} and \eqref{eq:e2} yields the advertised upper bound for $G[{\bf p}] - (1-\mu)$, because
\begin{align*}
G[{\bf p}] - (1-\mu) &\leq \mu - \frac{p_1}{\mu}\,(1-F_0) \\
&= \mu-1+F_0 + (1-F_0)\,\left(1-\frac{p_1}{\mu}\right) \\
&\leq \mu-1+F_0 + 2\,\frac{1-F_0}{\mu}\,(\mu-1+F_0) \leq 3\,(\mu-1+F_0).
\end{align*}
The proof of Lemma \ref{lem:observations} is therefore completed.
\end{proof}

We are now in a position to prove the following main result.
\begin{theorem}\label{thm:2}
Assume that ${\bf p}(t)$ is a solution of the system \eqref{eq:law_limit_repeat}-\eqref{eq:Q_rewrite} with ${\bf p}(0) \in \mathcal{S}_\mu$ and $\mu \in (0,1)$, then for all $t\geq t^* \coloneqq \min\{s \geq 0 \mid p_0(s) \leq 1-\mu+\delta\}$ we have
\begin{equation}\label{eq:Gini_decay_2}
G[{\bf p}(t)] - (1-\mu) \leq \left(G[{\bf p}(t^*)] - (1-\mu)\right)\,\expo^{-\frac{2\,C_\mu}{3\,\mu}\,(t-t^*)}
\end{equation}
where $C_\mu \coloneqq \frac{\min\{\mu,(1-\mu)\}}{4}$, $\delta = K\,\min\{\mu,(1-\mu)\}$ and $K > 0$ is such that $K^2 + 2\,K \leq \frac 12$. On the other hand, we also have
\begin{equation}\label{eq:Gini_lower_bound2}
G[{\bf p}(t)] - (1-\mu) \geq  \frac{\gamma_\mu}{1-\mu}\,\expo^{-(1-\mu)\,t} \quad \forall~t \geq t^*,
\end{equation}
where $\gamma_\mu \coloneqq \frac{2}{\mu}\,C_\mu\,(1-\mu)\,\frac{\mu-1+p_0(0)}{p_0(0)}$.
\end{theorem}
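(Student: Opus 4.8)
The plan is to rerun the Lyapunov argument of Theorem \ref{thm:1} with the constant $p_0$ replaced by the flux $\nu\coloneqq\mu-1+p_0$ (the very coefficient appearing in \eqref{eq:Q_rewrite}, equal to $\sum_{k\geq2}(k-1)p_k$), to deduce a closed differential inequality for the nonnegative quantity $G[{\bf p}(t)]-(1-\mu)$, and then to integrate it in two different ways: Gr\"onwall from $t^*$ for the upper bound \eqref{eq:Gini_decay_2}, and from $t$ to $+\infty$ for the lower bound \eqref{eq:Gini_lower_bound2}. The structural advantage here is that $\nu(t)=p_0(t)-(1-\mu)$ is known in closed form through \eqref{eq:p_0;case1}, is nonnegative, and decreases monotonically to $0$ (so the defining inequality $p_0\leq1-\mu+\delta$ of $t^*$ persists for all $t\geq t^*$), while $G[{\bf p}]-(1-\mu)$ is squeezed between $\nu$ and $3\nu$ by Lemma \ref{lem:observations}.

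First I would compute $\tfrac{\dd}{\dd t}G[{\bf p}]$ exactly as in the proof of Theorem \ref{thm:1}, now accounting for the normalisation $\tfrac1\mu$ in \eqref{def1:Gini}. Setting $A_i\coloneqq\nu\,p_i-i\,p_{i+1}$ one still has $\sum_i A_i=0$ (this time because $\sum_i i\,p_{i+1}=\nu$), so the same telescoping and Abel summation yield the analogue of \eqref{eq:Gini_derivative_identity},
\begin{equation*}
\frac{\mu}{2}\,\frac{\dd}{\dd t}G[{\bf p}]=\nu-\sum_{i\geq0}(F_{i+1}-F_i)\,F_i\,(\nu+i).
\end{equation*}
The two monotonicity estimates $\sum_i i\,F_i(F_{i+1}-F_i)\geq F_1\,\nu$ and $\sum_i(F_{i+1}-F_i)F_i\geq F_0F_1-F_0^2+F_1-F_1^2$ then give, after factoring out $\nu$ and using $1-F_1\leq\nu$ (which is exactly \eqref{eq:obser3}),
\begin{equation*}
\frac{\mu}{2}\,\frac{\dd}{\dd t}G[{\bf p}]\leq-\nu\bigl(F_0F_1-F_0^2-(1-F_1)^2\bigr)\leq-\nu\bigl(F_0\,p_1-\nu^2\bigr).
\end{equation*}

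The crux is the uniform lower bound $F_0\,p_1-\nu^2\geq C_\mu$ for $t\geq t^*$, and this is precisely where the calibration $\delta=K\min\{\mu,1-\mu\}$ with $K^2+2K\leq\tfrac12$ is used. On $\{t\geq t^*\}$ one has $\nu\leq\delta$, hence $F_0=1-\mu+\nu\geq1-\mu$ and, by \eqref{eq:obser3}, $p_1\geq\mu-2\nu\geq\mu(1-2K)$ (using $\nu\leq K\mu$). Combining these with the elementary facts $m^2\leq\mu(1-\mu)$ and $\mu(1-\mu)\geq m/2$, where $m\coloneqq\min\{\mu,1-\mu\}$, gives
\begin{equation*}
F_0\,p_1-\nu^2\geq\mu(1-\mu)\bigl(1-2K-K^2\bigr)\geq\tfrac12\,\mu(1-\mu)\geq\tfrac{m}{4}=C_\mu.
\end{equation*}
Feeding back the squeeze $\nu\geq\tfrac13\bigl(G[{\bf p}]-(1-\mu)\bigr)$ from \eqref{eq:obser4} closes the loop into $\tfrac{\dd}{\dd t}\bigl(G-(1-\mu)\bigr)\leq-\tfrac{2C_\mu}{3\mu}\bigl(G-(1-\mu)\bigr)$, and Gr\"onwall produces \eqref{eq:Gini_decay_2}; in particular this already establishes $G[{\bf p}(t)]\to1-\mu$.

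For the lower bound I would \emph{not} pass through \eqref{eq:obser4}; instead I retain the sharper inequality $\tfrac{\dd}{\dd t}G\leq-\tfrac{2C_\mu}{\mu}\,\nu(t)$ (the bound above, before invoking \eqref{eq:obser4}), integrate it from $t$ to $+\infty$ using the just-proved convergence $G\to1-\mu$, and substitute the explicit estimate $\nu(s)=p_0(s)-(1-\mu)\geq(1-\mu)\,\tfrac{p_0(0)-(1-\mu)}{p_0(0)}\,\expo^{-(1-\mu)s}$ read off from \eqref{eq:p_0;case1} (dropping the nonpositive term $1-\mu-p_0(0)$ in its denominator). The resulting elementary integral reproduces exactly $\tfrac{\gamma_\mu}{1-\mu}\,\expo^{-(1-\mu)t}$, since $\tfrac{\gamma_\mu}{1-\mu}=\tfrac{2C_\mu}{\mu}\,\tfrac{p_0(0)-(1-\mu)}{p_0(0)}$. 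I expect the genuinely delicate step to be the constant-chasing in $F_0\,p_1-\nu^2\geq C_\mu$: every inequality there must be tight enough that the admissible window for $K$ (and hence for $\delta$ and $t^*$) is non-empty, which is what the hypothesis $K^2+2K\leq\tfrac12$ guarantees; the remaining integrations are routine.
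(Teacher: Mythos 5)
Your proposal is correct and follows essentially the same route as the paper's own proof: the same Gini-derivative identity, the same two monotonicity estimates combined with Lemma \ref{lem:observations}, Gr\"onwall from $t^*$ for the upper bound, and integration of the pre-\eqref{eq:obser4} inequality against the explicit formula \eqref{eq:p_0;case1} for the lower bound. The only cosmetic differences are your constant-chasing for $F_0\,p_1-\nu^2\geq C_\mu$ (via $m^2\leq\mu\,(1-\mu)$ and $\mu\,(1-\mu)\geq m/2$), which avoids the paper's case analysis on $\mu\lessgtr\tfrac12$ while landing on the same $C_\mu$, and your explicit remark that the monotone decay of $p_0$ makes the condition defining $t^*$ persist for all later times, a point the paper uses implicitly.
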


\begin{proof}
Preceding as in the proof of Theorem \ref{thm:1}, we arrive at the following expression for the time derivative of $G[{\bf p}(t)]$:
\begin{equation}\label{eq:Gini_derivative_identity2}
\frac{\mu}{2}\,\frac{\dd}{\dd t} G[{\bf p}] = \mu-1+F_0 - \sum\limits_{i\geq 0} (F_{i+1}-F_i)\,F_i\,(F_0+i+\mu-1),
\end{equation}
which is a (straightforward) generalization of the identity \eqref{eq:Gini_derivative_identity} shown in Theorem \ref{thm:1}. We notice again that \[\sum\limits_{i\geq 0} i\,F_i\,(F_{i+1}-F_i) \geq F_1\,\sum\limits_{i\geq 0} i\,(F_{i+1}-F_i) = F_1\,\sum\limits_{i\geq 0} i\,p_{i+1} = F_1\,(\mu-1+F_0) \] as well as
\begin{align*}
\sum\limits_{i\geq 0} (F_{i+1}-F_i)\,F_i &\geq F_0\,(F_1-F_0)+F_1\,\sum\limits_{i\geq 0} (F_{i+1}-F_i) \\
&= F_0\,(F_1-F_0)+F_1\,(1-F_1) = F_0\,F_1 - F^2_0 + F_1 - F^2_1.
\end{align*}
As a consequence, we deduce from \eqref{eq:Gini_derivative_identity2} that
\begin{equation}\label{eq:Gini_derivative_inequality2}
\frac{\mu}{2}\,\frac{\dd}{\dd t} G[{\bf p}] \leq = -(\mu-1+F_0)\,\left(F_0\,F_1-F^2_0-(1-F_1)^2\right).
\end{equation}
By \eqref{eq:obser3} from Lemma \ref{lem:observations}, \[F_0\,F_1-F^2_0-(1-F_1)^2 \geq F_0\,(2-\mu-F_0) - F^2_0 - (\mu-1+F_0)^2.\] Now for all $t \geq t^*$,
\begin{equation}\label{eq:chain2}
\begin{aligned}
F_0\,(2-\mu-F_0) - F^2_0 - (\mu-1+F_0)^2 &\geq F_0\,(1-\delta)-F^2_0-\delta^2 = F_0\,(1-\delta-F_0)-\delta^2 \\
&\geq F_0\,(\mu-2\,\delta)-\delta^2 \geq (1-\mu)\,(\mu-2\,\delta)-\delta^2 \\
&\geq \frac{\min\{\mu,(1-\mu)\}}{4} = C_\mu.
\end{aligned}
\end{equation}
We remark here that the last inequality in \eqref{eq:chain2} follows from the following reasoning: if $\mu \leq \frac 12$, then \[(1-\mu)\,(\mu-2\,\delta)-\delta^2 \geq \frac 12\,(\mu-2\,\delta)-\delta^2 = \left(\frac 12 - (K^2+K)\,\mu\right)\,\mu \geq \left(\frac 12 - \frac 12\,\mu\right)\,\mu \geq \frac{\mu}{4}.\] On the other hand, if $\frac 12 \leq \mu < 1$, we have
\begin{align*}
(1-\mu)\,(\mu-2\,\delta)-\delta^2 &= (1-\mu)\,\left(\mu-2\,(1-\mu)\,K-K^2\,(1-\mu)\right) \\
&= (1-\mu)\,\left(\mu-(1-\mu)\,(K^2+2\,K)\right) \\
&\geq (1-\mu)\,\left(\mu - \frac{1}{4}\right) \geq \frac{1-\mu}{4}.
\end{align*}
Inserting the estimate \eqref{eq:chain2} into \eqref{eq:Gini_derivative_inequality2} gives rise to
\begin{equation}\label{eq:Gini_differential_inequality3}
\frac{\mu}{2}\,\frac{\dd}{\dd t} G[{\bf p}(t)] \leq -C_\mu\,\left(\mu-1+F_0(t)\right) \leq -\frac{C_\mu}{3}\,\left(G[{\bf p}(t)]-(1-\mu)\right) \quad \forall~t \geq t^*,
\end{equation}
where the last inequality in \eqref{eq:Gini_differential_inequality3} follows from \eqref{eq:obser4} proved in Lemma \ref{lem:observations}. An routine application of Grownall's inequality yields \[G[{\bf p}(t)] - (1-\mu) \leq \left(G[{\bf p}(t^*)] - (1-\mu)\right)\,\expo^{-\frac{2\,C_\mu}{3\,\mu}\,(t-t^*)}\quad \forall~t \geq t^*.\] To derive the bound reported in \eqref{eq:Gini_lower_bound2}, we invoke the explicit formula for $p_0(t)$ \eqref{eq:p_0;case1} to obtain \[F_0(t) - (1-\mu) \geq (1-\mu)\,\frac{\mu-1+p_0(0)}{p_0(0)}\,\expo^{(\mu-1)\,t},\] whence \eqref{eq:Gini_differential_inequality3} implies that
\begin{equation}\label{eq:b2}
\frac{\dd}{\dd t} G[{\bf p}(t)] \leq -\frac{2}{\mu}\,C_\mu\,\left(\mu-1+F_0(t)\right) \leq -\gamma_\mu\,\expo^{(\mu-1)\,t} \quad \forall~t \geq t^*.
\end{equation}
By integrating the differential inequality \eqref{eq:b2} we get
\begin{equation*}
G[{\bf p}(t)] - (1-\mu) \geq -\int_t^{\infty} \frac{\dd}{\dd s} G[{\bf p}(s)]\,\dd s \geq \frac{\gamma_\mu}{1-\mu}\,\expo^{-(1-\mu)\,t} \quad \forall~t \geq t^*,
\end{equation*}
as desired.
\end{proof}

To illustrate the quantitative convergence result \eqref{eq:Gini_decay_2} of the Gini index we plot the evolution of the Gini index $G[{\bf p}(t)]$ over time (see figure \ref{fig:cv_gini_mu=06}) when $\mu = 0.6$, under the same settings as used for figure \ref{fig:cv_gini_mu=1}. It is readily observed that the decay of $G[{\bf p}(t)]$ is exponential with respect to time (at least for large enough $t$).

\begin{figure}[!htb]
\centering
\includegraphics[width=.97\textwidth]{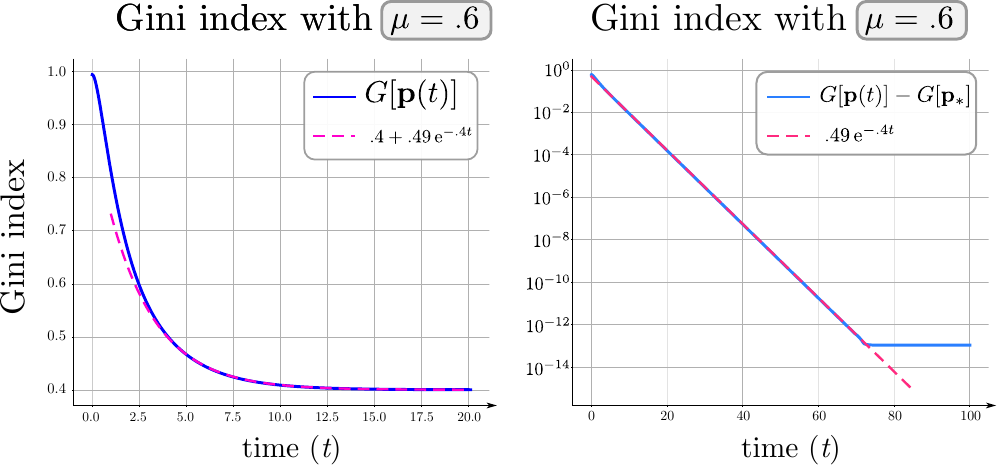}
\caption{{\bf Left}: Decay of the Gini index $G[{\bf p}(t)]$ with respect to time. {\bf Right}: Decay of the Gini index $G[{\bf p}(t)]$ in semilog scale. The decay is exponential with respect to time, as justified by the Theorem \ref{thm:2}.}
\label{fig:cv_gini_mu=06}
\end{figure}

Interestingly, the numerical experiment reported in figure \ref{fig:cv_gini_mu=06} suggests that the upper bound on $G[{\bf p}(t)] - (1-\mu)$ guaranteed by \eqref{eq:Gini_decay_2} might be refined to match the lower bound \eqref{eq:Gini_lower_bound2}, in the sense that it might be possible to prove a reversed version of \eqref{eq:Gini_lower_bound2} (at the price of a different constant in front of the exponential factor $\expo^{-(1-\mu)\,t}$). Given the fact that we are able to prove a two-sided sharp bound \eqref{eq:Gini_decay_1} on the behavior of the Gini index in the scenario where $\mu = 1$, it is quite natural to speculate that a similar two-sided sharp bound might be established for $\mu \in (0,1)$ as well, leading us to
\begin{equation}\label{eq:Gini_conjecture}
C_1\,\expo^{-(1-\mu)\,t} \leq G[{\bf p}(t)] - (1-\mu) \leq  C_2\,\expo^{-(1-\mu)\,t}
\end{equation}
for all large $t$, in which $C_1$ and $C_2$ are two absolute constants depending only on $\mu$. At this point, we leave the potential refinement of the rate of exponential decay reported in \eqref{eq:Gini_decay_2} as a future work.

\begin{remark}
Similar to a previous remark, it is not possible to establish a stronger result such as $\frac{\dd}{\dd t} G[{\bf p}(t)] \leq 0$ for all $t \geq 0$. For instance, if we take $\mu = \frac 12$ and ${\bf p}(0) \in \mathcal{S}_{\frac 12}$ such that $p_0 = \frac 34$, $p_2 = \frac 14$, and $p_n = 0$ for all $n \in \mathbb N \setminus \{0,2\}$, then the identity \eqref{eq:Gini_derivative_identity2} for the time derivative of the Gini index along solutions of \eqref{eq:law_limit_repeat}-\eqref{eq:Q_rewrite} allows us to arrive at
\begin{equation}\label{eq:example2}
\frac 14\,\frac{\dd}{\dd t} G[{\bf p}]\Big|_{t=0} = \frac 12 - 1 + \frac 34 - \frac 14\cdot \frac 34\cdot \frac 54 = \frac{1}{64} > 0.
\end{equation}
\end{remark}

\begin{remark}
As a consequence of Theorem \ref{thm:2}, the decay rate of $G[{\bf p}(t)]$ to $G[{\bf p}^*] = 1-\mu$ is at most $\mathcal{O}\left(\expo^{-(1-\mu)\,t}\right)$. Meanwhile, the aforementioned decay rate is at least $\Omega\left(\expo^{-\frac 16\,t}\right)$ for $\mu \leq \frac 12$ and is at least $\Omega\left(\expo^{-\frac{1-\mu}{6\,\mu}\,t}\right)$ when $\frac 12 \leq \mu < 1$.
\end{remark}

\begin{remark}
At this point, it is plausible to believe that the reinterpretation of the sticky dispersion model (on the complete graph) using terminologies from econophysics is quite beneficial, in the sense that it makes the Gini index \eqref{def1:Gini} as a natural candidate Lyapunov functional for the examination of the large time behavior of the mean-field ODE system \eqref{eq:law_limit_repeat}-\eqref{eq:Q_rewrite}, at least when $\mu \leq 1$.
\end{remark}

\subsection{Relaxation to modified Poisson distribution for $\mu > 1$}\label{subsec:sec3.2}

Our goal in this section is to analyze the asymptotic behaviour of the system \eqref{eq:law_limit_repeat}-\eqref{eq:Q_rewrite} when the parameter $\mu \in (1,\infty)$, and establish convergence of solutions ${\bf p}(t)$ of \eqref{eq:law_limit_repeat}-\eqref{eq:Q_rewrite} to the modified Poisson distribution $\overbar{\bf p}$ \eqref{eq:modified_Poisson} as $t \to \infty$. For this purpose, we point out the following key observation regarding the operator $Q$, whose proof consists of elementary algebraic manipulations and will be omitted.

\begin{lemma}\label{lem:decomposition_of_Q}
For each ${\bf p} \in \mathcal{P}(\mathbb N)$, we can decompose the operator $Q$ \eqref{eq:Q_rewrite} as
\begin{equation}\label{eq:Q_decompose}
Q[{\bf p}] = \widehat{Q}[{\bf p}] - p_0\,\mathcal{D}^-[{\bf p}],
\end{equation}
where the operators $\hat{Q}$ and $\mathcal{D}^-$ are defined, respectively, by
\begin{equation}\label{eq:Q_hat}
  \widehat{Q}[{\bf p}]_n = \left\{
    \begin{array}{ll}
      -(\mu-1)\,p_0 & \quad n=0, \\
      n\,p_{n+1}+(\mu-1)\,p_{n-1}- (n-1)\,p_n - (\mu-1)\,p_n, & \quad n \geq 1.
    \end{array}
  \right.
\end{equation}
and $\mathcal{D}^-[{\bf p}] = {\bf p} - \mathcal{R}[{\bf p}]$ with
\begin{equation}\label{eq:R}
  \mathcal{R}[{\bf p}]_n = \left\{
    \begin{array}{ll}
     0 & \quad n=0, \\
     p_{n-1}, & \quad n \geq 1.
    \end{array}
  \right.
\end{equation}
In particular, the operator $\widehat{Q}$ is \emph{linear} and preserves the total probability mass (i.e., $\widehat{Q}[{\bf p}] \in \mathcal{P}(\mathbb N)$).
\end{lemma}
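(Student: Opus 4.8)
The plan is to verify the identity \eqref{eq:Q_decompose} directly and coordinate-by-coordinate, since both sides are explicit finite expressions in the entries of ${\bf p}$. First I would record that $\mathcal{D}^-[{\bf p}]_0 = p_0 - \mathcal{R}[{\bf p}]_0 = p_0$ and $\mathcal{D}^-[{\bf p}]_n = p_n - p_{n-1}$ for $n\geq 1$, directly from \eqref{eq:R}. The boundary component $n=0$ is then immediate: $\widehat{Q}[{\bf p}]_0 - p_0\,\mathcal{D}^-[{\bf p}]_0 = -(\mu-1)\,p_0 - p_0^2 = -(\mu-1+p_0)\,p_0 = Q[{\bf p}]_0$. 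For $n\geq 1$ the claim reduces to the single algebraic line
\[
\widehat{Q}[{\bf p}]_n - p_0\,(p_n - p_{n-1}) = n\,p_{n+1} + (\mu-1+p_0)\,p_{n-1} - (n-1)\,p_n - (\mu-1+p_0)\,p_n,
\]
which is exactly $Q[{\bf p}]_n$ from \eqref{eq:Q_rewrite} once the freed-up $p_0\,p_{n-1}$ and $-p_0\,p_n$ terms are absorbed into the coefficients. The conceptual content of the lemma is precisely this observation: the entire nonlinearity of $Q$ (every term carrying the extra $p_0$ factor) is captured by $-p_0\,\mathcal{D}^-[{\bf p}]$, leaving the $p_0$-free linear remainder $\widehat{Q}$.

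The linearity of $\widehat{Q}$ requires no computation: inspecting \eqref{eq:Q_hat}, each component $\widehat{Q}[{\bf p}]_n$ is a fixed linear combination of $p_{n-1}$, $p_n$, $p_{n+1}$ whose coefficients depend only on $n$ and the fixed parameter $\mu$, so $\widehat{Q}[\alpha\,{\bf p}+\beta\,{\bf q}] = \alpha\,\widehat{Q}[{\bf p}]+\beta\,\widehat{Q}[{\bf q}]$.

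For the mass-preservation claim I would show $\sum_{n\geq 0}\widehat{Q}[{\bf p}]_n = 0$ by summing \eqref{eq:Q_hat} and reindexing. The two downward-flux contributions $\sum_{n\geq 1} n\,p_{n+1}$ and $\sum_{n\geq 1}(n-1)\,p_n$ both equal $\sum_{m\geq 1}(m-1)\,p_m$ after the substitution $m=n+1$ in the first, so they cancel; the birth contributions combine to $-(\mu-1)\,p_0 + (\mu-1)\sum_{k\geq 0}p_k - (\mu-1)\sum_{n\geq 1}p_n = (\mu-1)\big(1 - p_0 - (1-p_0)\big) = 0$, using $\sum_n p_n = 1$. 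This confirms that $\widehat{Q}$ is a mass-conserving generator.

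The only point that genuinely demands care, and hence the sole candidate for an obstacle in an otherwise routine argument, is the legitimacy of the rearrangements in the mass computation. These telescoping and reindexing steps are justified because ${\bf p}\in\mathcal{P}(\mathbb{N})$ has finite mean $\mu$, so $\sum_n n\,p_n < \infty$ and every series involved converges absolutely; absent this moment condition one would need to argue more carefully about the unboundedness of the death rate. Everything else is elementary bookkeeping, in line with the paper's remark. I would close by noting the purpose of the splitting: $\widehat{Q}$ is the generator of an exactly solvable linear birth–death process (constant birth rate $\mu-1$, death rate $n-1$ from state $n$) whose unique stationary law is the shifted Poisson $\overbar{\bf p}$ of \eqref{eq:modified_Poisson}, while the nonlinear correction $-p_0\,\mathcal{D}^-[{\bf p}]$ switches off precisely when $p_0 = 0$, which is exactly the feature exploited in the $\mu > 1$ analysis.
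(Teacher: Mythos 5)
Your verification is correct and is exactly the ``elementary algebraic manipulations'' that the paper explicitly omits: a coordinate-wise check of \eqref{eq:Q_decompose}, inspection for linearity, and a reindexing argument for $\sum_{n\geq 0}\widehat{Q}[{\bf p}]_n=0$. Your added caveat that the mass-conservation rearrangements require $\sum_n n\,p_n<\infty$ (automatic here since ${\bf p}\in\mathcal{S}_\mu$) is a point the paper glosses over by stating the lemma for all of $\mathcal{P}(\mathbb N)$, and it is a worthwhile refinement rather than a deviation in approach.
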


It is also straightforward to check that $\overbar{\bf p} \in \ker(\widehat{Q})$, i.e., $\widehat{Q}[\overbar{\bf p}] = {\bf 0}$. As a consequence of Lemma
\eqref{lem:decomposition_of_Q}, we can recast the ODE system \eqref{eq:law_limit_repeat}-\eqref{eq:Q_rewrite} as
\begin{equation}\label{eq:main_ODE}
\frac{\dd}{\dd t} {\bf p}(t) = \widehat{Q}[{\bf p}(t)] - p_0(t)\,\mathcal{D}^-[{\bf p}(t)].
\end{equation}
As we have an explicit expression for $p_0(t)$ \eqref{eq:p_0;case1} (depending only on $p_0(0)$), the system \eqref{eq:main_ODE} is ``almost linear''. In fact, once the initial data ${\bf p}(0) \in \mathcal{S}_\mu$ (or $p_0(0)$ to be more precise) is provided/known, the evolution system \eqref{eq:main_ODE} will become a linear (but non-homogeneous in time) dynamical system. Motivated by these observations, we decide to view \eqref{eq:main_ODE} as a perturbed version of the following linear ODE system

\begin{equation}\label{eq:linear_ODE_hat}
\frac{\dd}{\dd t} \widehat{{\bf p}}(t) = \widehat{Q}[\widehat{{\bf p}}(t)].
\end{equation}

We now study the large time behaviour of the linear equation \eqref{eq:linear_ODE_hat} as it will facilitate our investigation of the original system \eqref{eq:main_ODE}. It turns out the linear system admits a Fokker-Planck formulation, similar to the so-called poor-biased exchange model introduced and studied in the recent work \cite{cao_derivation_2021}, whence techniques employed in \cite{cao_derivation_2021} can be applied here with suitable modifications. We start with several preliminary definitions.

\begin{definition}\label{def:2}
For any non-negative sequence $\{a_n\}_{n \in \mathbb N}$, we define $D^+ a_n \coloneqq a_{n+1}-a_n$ for all $n\geq 0$, $D^- a_n \coloneqq a_n - a_{n-1}$ for all $n \geq 1$, and $D^- a_0 \coloneqq a_0$. We will also adopt the convention that $\frac{a}{a} = 1$ when $a = 0$. Lastly, we define $\Delta a_n = D^-(D^+ a_n)$ for all $n\geq 0$.
\end{definition}

\begin{definition}\label{def:3}
We define the following sub-vector space of $\ell^2(\mathbb N)$:
\begin{equation}\label{eq:H0}
\mathcal{H} \coloneqq \left\{{\bf p} \in \ell^2(\mathbb{N}) \mid p^2_0 + \sum_{n=1}^\infty \frac{p^2_n}{\overbar{p}_n} < +\infty\right\}
\end{equation}
along with the induced scalar product
\begin{equation}\label{eq:scalar_prod_H}
\langle{\bf p},\mathbf{q} \rangle_{\mathcal{H}}  \coloneqq p_0\,q_0 + \sum_{n=1}^\infty \frac{p_n\,q_n}{\overbar{p}_n}.
\end{equation}
\end{definition}

Armed with these definitions, we then identity the Fokker-Planck structure behind the linear system \eqref{eq:linear_ODE_hat}.

\begin{lemma}\label{lem:FK_Qhat}
For each ${\bf p} \in \mathcal H$, we have
\begin{equation}\label{eq:FK_Qhat}
\widehat{Q}[{\bf p}]_n =  (\mu-1)\,D^-\left(\overbar{p}_n\,D^+ \left(\frac{p_n}{\overbar{p}_n}\right)\right)
\end{equation}
for all $n \in \mathbb N$.
\end{lemma}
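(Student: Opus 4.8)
The plan is to verify the identity by a direct computation carried out from the inside out. First I would simplify the inner ``flux'' term $g_n \coloneqq \overbar{p}_n\, D^+\!\left(\frac{p_n}{\overbar{p}_n}\right)$. Expanding the discrete forward derivative yields $g_n = \frac{\overbar{p}_n}{\overbar{p}_{n+1}}\, p_{n+1} - p_n$, so the entire computation hinges on the ratio $\overbar{p}_n / \overbar{p}_{n+1}$. Using the explicit form \eqref{eq:modified_Poisson_repeat} of the modified Poisson distribution, one finds $\frac{\overbar{p}_n}{\overbar{p}_{n+1}} = \frac{n}{\mu-1}$ for every $n \geq 1$, whence $g_n = \frac{n}{\mu-1}\, p_{n+1} - p_n$ for all $n \geq 1$.

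Next I would treat the boundary index $n = 0$ separately, since $\overbar{p}_0 = 0$ forces us to invoke the conventions of Definition \ref{def:2}. Writing $g_0 = \overbar{p}_0\,\frac{p_1}{\overbar{p}_1} - \overbar{p}_0\,\frac{p_0}{\overbar{p}_0}$, the first summand vanishes because $\overbar{p}_0 = 0$, while the second equals $p_0$ after applying the convention $\frac{\overbar{p}_0}{\overbar{p}_0} = 1$ (valid since $\overbar{p}_0 = 0$); hence $g_0 = -p_0$.

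With the flux $g_n$ in hand, I would apply the outer operator $(\mu-1)\,D^-$ and compare the result against the piecewise definition \eqref{eq:Q_hat} of $\widehat{Q}$, splitting into three cases. For $n = 0$, the convention $D^- g_0 = g_0$ gives $(\mu-1)\,D^- g_0 = -(\mu-1)\,p_0 = \widehat{Q}[{\bf p}]_0$. For $n \geq 2$, substituting the generic formulas for $g_n$ and $g_{n-1}$ and expanding $(\mu-1)\,(g_n - g_{n-1})$ reproduces exactly $n\,p_{n+1} + (\mu-1)\,p_{n-1} - (n-1)\,p_n - (\mu-1)\,p_n$, as required by \eqref{eq:Q_hat}. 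The only case needing a little care is $n = 1$, where $g_{n-1} = g_0 = -p_0$ rather than the generic expression; a separate check there confirms that the gain term $(\mu-1)\,p_0$ appears with the correct sign. Collecting the three cases establishes \eqref{eq:FK_Qhat} for all $n \in \mathbb{N}$.

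I expect no serious obstacle: the identity is essentially a telescoping rearrangement, and the argument is purely algebraic. The single point demanding attention is the consistent bookkeeping of the boundary conventions ($D^- a_0 = a_0$ and $\frac{a}{a} = 1$ for $a = 0$) at the indices $n = 0$ and $n = 1$, since a careless treatment there would destroy the match with the $n = 0$ branch of $\widehat{Q}$ and with the $(\mu-1)\,p_0$ gain term of the $n = 1$ equation.
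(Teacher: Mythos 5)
Your proposal is correct and follows essentially the same route as the paper's proof: a direct algebraic verification hinging on the ratio $\overbar{p}_n/\overbar{p}_{n+1}=\frac{n}{\mu-1}$ for $n\geq 1$, with the boundary conventions $D^- a_0 = a_0$ and $\frac{a}{a}=1$ (for $a=0$) handling the index $n=0$ exactly as the paper does. The only difference is presentational — you compute the flux first and then apply $D^-$, checking $n=1$ separately, whereas the paper starts from $\widehat{Q}[{\bf p}]_n$ and recognizes the Fokker--Planck form — so there is nothing to fix.
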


\begin{proof}
For $n=0$, we have \[(\mu-1)\,D^-\left(\overbar{p}_n\,D^+ \left(\frac{p_n}{\overbar{p}_n}\right)\right) = (\mu-1)\,\overbar{p}_0\,D^+ \left(\frac{p_0}{\overbar{p}_0}\right) = (\mu-1)\,\overbar{p}_0\,\left(\frac{p_1}{\overbar{p}_1}-\frac{p_0}{\overbar{p}_0}\right) = -(\mu-1)\,p_0.\] When $n\geq 1$, we compute
\begin{align*}
\frac{1}{\mu-1}\,\widehat{Q}[{\bf p}]_n  &= \frac{\overbar{p}_n}{\overbar{p}_{n+1}}\,p_{n+1}-\frac{\overbar{p}_{n-1}}{\overbar{p}_{n}}\,p_{n} - \left(\frac{\overbar{p}_n}{\overbar{p}_{n}}\,p_{n}-\frac{\overbar{p}_{n-1}}{\overbar{p}_{n-1}}\,p_{n-1}\right) \\
&= D^-\left(\overbar{p}_n\,D^+ \left(\frac{p_n}{\overbar{p}_n}\right)\right).
\end{align*}
Thus the proof of Lemma \ref{lem:FK_Qhat} is completed.
\end{proof}

As an immediate corollary of Lemma \ref{lem:FK_Qhat}, we deduce the following integration by parts formula:
\begin{corollary}\label{Cor:1}
Assume that $\widehat{{\bf p}}(t)$ is a solution of the linear ODE system \eqref{eq:linear_ODE_hat}, then for any ${\bf \varphi} \in \mathcal H$ we have
\begin{equation}\label{eq:IBP_linear_ODE}
\sum\limits_{n=1}^\infty \widehat{p}'_n\,\varphi_n = -(\mu-1)\,\sum\limits_{n=1}^\infty \overbar{p}_n\,D^+\left(\frac{\widehat{p}_n}{\overbar{p}_n}\right)\,D^+\varphi_n + (\mu-1)\,\widehat{p}_0\,\varphi_1.
\end{equation}
In particular, taking $\varphi_n = \frac{\widehat{p}_n}{\overbar{p}_n}$ for all $n\geq 1$ gives rise to
\begin{equation}\label{eq:IBP_linear_corollary}
\frac 12\,\frac{\dd}{\dd t} \sum\limits_{n=1}^\infty \frac{(\widehat{p}_n-\overbar{p}_n)^2}{\overbar{p}_n}  = -(\mu-1)\,\sum\limits_{n=1}^\infty \overbar{p}_n\,\left|D^+\left(\frac{\widehat{p}_n}{\overbar{p}_n}\right)\right|^2 + (\mu-1)\,\widehat{p}_0\,\frac{\widehat{p}_1}{\overbar{p}_1} - (\mu-1)\,\widehat{p}_0.
\end{equation}
\end{corollary}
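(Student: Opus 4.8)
The plan is to derive both displays directly from the Fokker--Planck representation of $\widehat Q$ furnished by Lemma \ref{lem:FK_Qhat} via a discrete summation-by-parts (Abel) argument, the only delicate point being the boundary contribution at $n=0$. To organize the computation I would introduce the discrete flux $J_n \coloneqq \overbar{p}_n\,D^+\!\left(\widehat{p}_n/\overbar{p}_n\right)$, so that Lemma \ref{lem:FK_Qhat} combined with the equation \eqref{eq:linear_ODE_hat} reads $\widehat{p}'_n = (\mu-1)\,D^- J_n$ for every $n\in\mathbb N$. In particular, the convention $D^- a_0 = a_0$ from Definition \ref{def:2} forces $\widehat{p}'_0 = (\mu-1)\,J_0$; comparing with $\widehat Q[\widehat{\bf p}]_0 = -(\mu-1)\,\widehat{p}_0$ (equivalently, evaluating $J_0 = \overbar{p}_0\,(\widehat{p}_1/\overbar{p}_1 - \widehat{p}_0/\overbar{p}_0)$ with $\overbar{p}_0 = 0$ and the convention $a/a = 1$) pins down the boundary value $J_0 = -\widehat{p}_0$. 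This identity is the crux of the whole computation, and it is exactly where the vanishing of $\overbar{p}_0$ enters.

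With $J_0$ in hand, I would write $\sum_{n\geq 1}\widehat{p}'_n\,\varphi_n = (\mu-1)\sum_{n\geq 1}(J_n - J_{n-1})\,\varphi_n$ and carry out Abel summation over a finite window $1\leq n\leq N$. This produces three pieces: the interior sum $-\sum_{n=1}^{N-1} J_n\,D^+\varphi_n$, the boundary term $-J_0\,\varphi_1$, and a far-field term $J_N\,\varphi_N$. The membership $\widehat{\bf p},\varphi\in\mathcal H$ guarantees $J_N\,\varphi_N\to 0$ as $N\to\infty$ by the weighted $\ell^2$ structure of $\mathcal H$, so passing to the limit is legitimate. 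Substituting $J_0 = -\widehat{p}_0$ and unfolding $J_n = \overbar{p}_n\,D^+\!\left(\widehat{p}_n/\overbar{p}_n\right)$ then yields precisely \eqref{eq:IBP_linear_ODE}.

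For the corollary \eqref{eq:IBP_linear_corollary} I would expand $(\widehat{p}_n - \overbar{p}_n)^2/\overbar{p}_n = \widehat{p}_n^2/\overbar{p}_n - 2\,\widehat{p}_n + \overbar{p}_n$ and differentiate in $t$, using that $\overbar{p}_n$ is constant, to obtain $\tfrac12\tfrac{\dd}{\dd t}\sum_{n\geq 1}(\widehat{p}_n - \overbar{p}_n)^2/\overbar{p}_n = \sum_{n\geq 1}\widehat{p}'_n\,(\widehat{p}_n/\overbar{p}_n - 1)$. The contribution of the $-1$ term is $-\sum_{n\geq 1}\widehat{p}'_n$, and since $\widehat Q$ preserves total mass (Lemma \ref{lem:decomposition_of_Q}) one has $\sum_{n\geq 0}\widehat{p}'_n = 0$, hence $-\sum_{n\geq 1}\widehat{p}'_n = \widehat{p}'_0 = -(\mu-1)\,\widehat{p}_0$, which is the last term of \eqref{eq:IBP_linear_corollary}. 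Applying the just-proved formula \eqref{eq:IBP_linear_ODE} with $\varphi_n = \widehat{p}_n/\overbar{p}_n$ to the remaining sum $\sum_{n\geq 1}\widehat{p}'_n\,\widehat{p}_n/\overbar{p}_n$ returns the Dirichlet-type term $-(\mu-1)\sum_{n\geq 1}\overbar{p}_n\,\bigl|D^+(\widehat{p}_n/\overbar{p}_n)\bigr|^2$ together with the boundary term $(\mu-1)\,\widehat{p}_0\,\widehat{p}_1/\overbar{p}_1$, and assembling the three contributions gives \eqref{eq:IBP_linear_corollary}.

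The main obstacle is entirely at the boundary $n=0$: because every sum runs over $n\geq 1$ while the operator $\widehat Q$ couples the site $n=0$ to $n=1$, one must correctly isolate $J_0 = -\widehat{p}_0$ (reconciling $\overbar{p}_0 = 0$ with the convention $a/a = 1$) and must justify the decay of the far-field term $J_N\,\varphi_N$ using the norm on $\mathcal H$; once these two points are settled, the remainder is routine bookkeeping with summation by parts and mass conservation.
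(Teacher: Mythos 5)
Your proposal is correct and takes essentially the same approach as the paper: the paper explicitly skips the derivation of \eqref{eq:IBP_linear_ODE}, and your flux/Abel-summation argument with the boundary value $J_0 = -\widehat{p}_0$ and vanishing far-field term is exactly the discrete integration by parts it has in mind. Your proof of \eqref{eq:IBP_linear_corollary} coincides with the paper's: expand the quadratic, use mass conservation so that the constant-term contribution reduces to $\widehat{p}'_0 = -(\mu-1)\,\widehat{p}_0$, and apply \eqref{eq:IBP_linear_ODE} with $\varphi_n = \widehat{p}_n/\overbar{p}_n$.
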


\begin{proof}
We skip the derivation of the basic formula \eqref{eq:IBP_linear_ODE} and detail the proof of \eqref{eq:IBP_linear_corollary}. First, notice that
\[\sum\limits_{n=1}^\infty \frac{(\widehat{p}_n-\overbar{p}_n)^2}{\overbar{p}_n} = \sum\limits_{n=1}^\infty \frac{\widehat{p}^2_n}{\overbar{p}_n} -2\,\sum\limits_{n=1}^\infty \widehat{p}_n + \sum\limits_{n=1}^\infty \overbar{p}_n = \sum\limits_{n=1}^\infty \frac{\widehat{p}^2_n}{\overbar{p}_n} - 2\,(1-\widehat{p}_0) + 1 = \sum\limits_{n=1}^\infty \frac{\widehat{p}^2_n}{\overbar{p}_n} + 2\,\widehat{p}_0 - 1.\] Invoking \eqref{eq:IBP_linear_ODE} with $\varphi_n = \frac{\widehat{p}_n}{\overbar{p}_n}$ for all $n\geq 1$ yields that
\begin{align*}
\frac 12\,\frac{\dd}{\dd t} \sum\limits_{n=1}^\infty \frac{(\widehat{p}_n-\overbar{p}_n)^2}{\overbar{p}_n} &= \frac 12\,\frac{\dd}{\dd t} \sum\limits_{n=1}^\infty \frac{\widehat{p}^2_n}{\overbar{p}_n} + \widehat{p}'_0 \\
&= -(\mu-1)\,\sum\limits_{n=1}^\infty \overbar{p}_n\,\left|D^+\left(\frac{\widehat{p}_n}{\overbar{p}_n}\right)\right|^2 + (\mu-1)\,\widehat{p}_0\,\frac{\widehat{p}_1}{\overbar{p}_1} - (\mu-1)\,\widehat{p}_0,
\end{align*}
completing the proof of \eqref{eq:IBP_linear_corollary}.
\end{proof}

In view of the content of Corollary \ref{Cor:1}, an upper bound for $\sum\limits_{n=1}^\infty \frac{(\widehat{p}_n-\overbar{p}_n)^2}{\overbar{p}_n}$ (at time $t$) can be derived as long as we can control $\sum\limits_{n=1}^\infty \frac{(\widehat{p}_n-\overbar{p}_n)^2}{\overbar{p}_n}$ in terms of $\sum\limits_{n=1}^\infty \overbar{p}_n\,\left|D^+\left(\frac{\widehat{p}_n}{\overbar{p}_n}\right)\right|^2$. Inspired from the recent work \cite{cao_derivation_2021} on a related model, we prove a (weak) Poincare-type inequality for the modified Poisson distribution $\overbar{{\bf p}}$ \eqref{eq:modified_Poisson}, whose proof will be used on the classical Poisson-Poincare inequality stated in \cite{boucheron_concentration_2013}.

\begin{proposition}\label{prop:Weak_Poincare}
For each ${\bf p} \in \mathcal{P}(\mathbb N)$ we have
\begin{equation}\label{eq:weak_Poincare}
\sum\limits_{n=1}^\infty \frac{(p_n-\overbar{p}_n)^2}{\overbar{p}_n} \leq p^2_0 + (\mu-1)\,\sum\limits_{n=1}^\infty \overbar{p}_n\,\left|D^+\left(\frac{p_n}{\overbar{p}_n}\right)\right|^2.
\end{equation}
\end{proposition}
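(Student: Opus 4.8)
The plan is to reduce this inequality to the classical Poisson--Poincar\'e inequality \cite{boucheron_concentration_2013} by an index shift, since $\overbar{{\bf p}}$ is nothing but a shifted Poisson law. Concretely, let $\lambda = \mu-1$ and let $Y \sim \mathrm{Poisson}(\lambda)$, so that $q_k \coloneqq \EE[\mathbbm 1\{Y=k\}] = \frac{\lambda^k}{k!}\,\expo^{-\lambda}$ satisfies $\overbar{p}_n = q_{n-1}$ for all $n\geq 1$. The natural test function to feed into the Poisson inequality is the shifted ratio $g(k) \coloneqq \frac{p_{k+1}}{\overbar{p}_{k+1}}$ for $k\geq 0$, which is well defined because $\overbar{p}_n>0$ for $n\geq 1$. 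First I would record the two elementary moment identities $\EE[g(Y)] = \sum_{k\geq 0} q_k\,\frac{p_{k+1}}{\overbar{p}_{k+1}} = \sum_{n\geq 1} p_n = 1-p_0$ and $\EE[g(Y)^2] = \sum_{n\geq 1} \frac{p_n^2}{\overbar{p}_n}$.

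The second step is the key exact decomposition of the left-hand side. Expanding the square and using $\sum_{n\geq 1} p_n = 1-p_0$ together with $\sum_{n\geq 1}\overbar{p}_n = 1$, one finds $\sum_{n\geq 1}\frac{(p_n-\overbar{p}_n)^2}{\overbar{p}_n} = \EE[g(Y)^2] - 2(1-p_0) + 1$. Substituting $\EE[g(Y)^2] = \Var(g(Y)) + (1-p_0)^2$ and simplifying the resulting quadratic in $p_0$ collapses everything to the clean identity
\begin{equation*}
\sum_{n=1}^\infty \frac{(p_n-\overbar{p}_n)^2}{\overbar{p}_n} = \Var\big(g(Y)\big) + p_0^2.
\end{equation*}
This is where the additive $p_0^2$ term in the statement is born: because $\overbar{p}_0 = 0$, the mass $p_0$ cannot be absorbed into the weighted $\chi^2$-sum, and instead surfaces as the ``defect'' by which $g$ fails to be centered (note $\EE[g(Y)] = 1-p_0 \neq 1$ in general). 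This is precisely what makes the inequality a \emph{weak} Poincar\'e inequality rather than a genuine one.

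The third step applies the classical Poisson--Poincar\'e inequality $\Var(g(Y)) \leq \lambda\,\EE\big[(g(Y+1)-g(Y))^2\big]$ and translates the right-hand side back into the discrete-gradient form of the statement. Writing $a_n = \frac{p_n}{\overbar{p}_n}$, one has $g(k+1)-g(k) = a_{k+2}-a_{k+1} = D^+ a_{k+1}$, so that $\EE\big[(g(Y+1)-g(Y))^2\big] = \sum_{k\geq 0} q_k\,(D^+a_{k+1})^2 = \sum_{n\geq 1}\overbar{p}_n\,\big|D^+(\tfrac{p_n}{\overbar{p}_n})\big|^2$ after the reindexing $n=k+1$. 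Combining this with the identity from the previous step yields exactly \eqref{eq:weak_Poincare}.

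I do not expect a serious obstacle here once the right change of variables is in place; the entire difficulty is bookkeeping. The main subtlety to get right is the non-centering of the test function $g$ and the careful tracking of the resulting $p_0^2$ contribution, which must be shown to match the inequality's stated constant exactly (rather than being hidden in an inequality). A minor technical point worth a remark is finiteness: if the right-hand side of \eqref{eq:weak_Poincare} is infinite the inequality is vacuous, and otherwise all the series above converge absolutely, so the algebraic manipulations and the application of the Poisson inequality to $g$ are fully justified.
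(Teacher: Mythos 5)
Your proposal is correct and follows essentially the same route as the paper: both reduce \eqref{eq:weak_Poincare} to the classical Poisson--Poincar\'e inequality of \cite{boucheron_concentration_2013} by exploiting that $\overbar{{\bf p}}$ is a unit-shifted Poisson$(\mu-1)$ law, apply it to the ratio $p_n/\overbar{p}_n$ (your $g(k)=p_{k+1}/\overbar{p}_{k+1}$ is exactly the paper's shifted test function $\tilde{f}$), and identify the $p_0^2$ term as the defect coming from $\EE[g(Y)]=1-p_0$ not equalling $1$. The only cosmetic difference is that you shift the index on the test function while the paper shifts the random variable ($X=1+Y$); the variance identity and the final reindexing of the discrete gradient are identical.
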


\begin{proof}
We first recall the standard Poisson-Poincare inequality provided in the monograph \cite{boucheron_concentration_2013} and revisited in \cite{cao_derivation_2021}. Indeed, the routine Poisson-Poincare inequality states that
\begin{equation}\label{eq:Poisson-Poincare}
\Var\left[f(Y)\right] \leq \nu\,\mathbb{E}\left[\left(f(Y+1)-f(Y)\right)^2\right]
\end{equation}
for any Poisson distributed random variable $Y$ with mean value $\nu$, where $f$ is a real-valued function. Thanks to the probabilistic interpretation of the modified Poisson distribution \eqref{eq:modified_Poisson} provided in a remark before section \ref{subsec:sec3.2}, we know that if we take $Y \sim \textrm{Possion}(\mu-1)$, then the law of $X \coloneqq 1 + Y$ coincides with $\overbar{{\bf p}}$. Therefore, by shifting the (test) function $f$ and define $\tilde{f}(x) \coloneqq f(x-1)$, we deduce from \eqref{eq:Poisson-Poincare} that
\begin{equation}\label{eq:Poisson-Poincare2}
\Var\left[\tilde{f}(X)\right] \leq (\mu-1)\,\mathbb{E}\left[\left(\tilde{f}(X+1)-\tilde{f}(X)\right)^2\right].
\end{equation}
Therefore, setting $\tilde{f}_n = \frac{p_n}{\overbar{p}_n}$ for $n\geq 1$ and $\tilde{f}_0 = 0$ into \eqref{eq:Poisson-Poincare2}, we obtain
\begin{align*}
\Var\left[\tilde{f}(X)\right] &= \mathbb{E}\left[\tilde{f}(X)-(1-p_0)\right]^2 = \sum\limits_{n=1}^\infty \left(\frac{p_n}{\overbar{p}_n}-1+p_0\right)^2\,\overbar{p}_n \\
&= \sum\limits_{n=1}^\infty \frac{(p_n-\overbar{p}_n)^2}{\overbar{p}_n} - p^2_0 \\
&\leq (\mu-1)\,\sum\limits_{n=1}^\infty \overbar{p}_n\,\left|D^+\left(\frac{p_n}{\overbar{p}_n}\right)\right|^2,
\end{align*}
which finishes the proof.
\end{proof}

Now we have all the required pieces to study the large time behaviour of solutions to the linear system \eqref{eq:linear_ODE_hat}. For the remaining part of this section, we will use $C$ to represent a generic positive (and bounded) constant whose value may vary from line to line.

\begin{proposition}\label{prop:linear_ODE} 
Suppose that $\widehat{{\bf p}}(t)$ is a solution of the linear system \eqref{eq:linear_ODE_hat} with initial condition $\widehat{{\bf p}}(0) \in \mathcal{P}(\mathbb N)$. Let $\HH(t) \coloneqq \frac 12\,\sum\limits_{n=1}^\infty \frac{(\widehat{p}_n(t)-\overbar{p}_n)^2}{\overbar{p}_n}$. Then for all $t \geq 0$,
\begin{equation}\label{eq:exponential_decay_Qhat}
\HH(t) \leq \left\{\begin{aligned}
&\HH(0)\,\expo^{-2\,t} + \frac{K_\mu}{\mu-3}\,\left(\expo^{-2\,t} - \expo^{-(\mu-1)\,t}\right) \quad \text{if~} \mu \neq 3,\\
&\HH(0)\,\expo^{-2\,t} + K_3\,t\,\expo^{-2\,t} \quad \textrm{if~} \mu = 3,\\
\end{aligned}
\right.
\end{equation}
in which $K_\mu \coloneqq \left(1 + (\mu-1)\,\expo^{\mu-1}\right)\,\widehat{p}_0(0)$. Consequently, for all $t \geq 0$ we also have
\begin{equation}\label{eq:exponential_decay_Hnorm}
\|\widehat{{\bf p}}(t) - \overbar{{\bf p}}\|^2_{\mathcal H} \leq \left\{\begin{aligned}
&\widehat{p}^2_0(0)\,\expo^{-2\,(\mu-1)\,t} +  2\,\HH(0)\,\expo^{-2\,t} + \frac{2\,K_\mu}{\mu-3}\,\left(\expo^{-2\,t} - \expo^{-(\mu-1)\,t}\right)\quad \textrm{if~} \mu \neq 3,\\
&\widehat{p}^2_0(0)\,\expo^{-4\,t} +  2\,\HH(0)\,\expo^{-2\,t} + 2\,K_3\,t\,\expo^{-2\,t} \quad \text{if~} \mu = 3.
\end{aligned}
\right.
\end{equation}
In other words,
\begin{equation}\label{eq:exponential_decay_Hnorm_compact_form}
\|\widehat{{\bf p}}(t) - \overbar{{\bf p}}\|^2_{\mathcal H} \leq \left\{\begin{aligned}
&C\,\expo^{-(\mu-1)\,t} \quad \textrm{if~} 1 < \mu < 3,\\
&C\,t\,\expo^{-2\,t} \quad \text{if~} \mu = 3, \\
&C\,\expo^{-2\,t} \quad \text{if~} \mu > 3.
\end{aligned}
\right.
\end{equation}
\end{proposition}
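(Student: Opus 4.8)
The plan is to turn the dissipation identity \eqref{eq:IBP_linear_corollary} of Corollary \ref{Cor:1} into a scalar differential inequality of the form $\HH'(t) \le -2\,\HH(t) + (\text{forcing})$, with an explicit exponentially decaying forcing term, and then integrate it by an integrating factor. The two ingredients that make this work are the weak Poincar\'e inequality \eqref{eq:weak_Poincare} of Proposition \ref{prop:Weak_Poincare}, which converts the (negative) Dirichlet-type dissipation on the right of \eqref{eq:IBP_linear_corollary} into $-2\,\HH(t)$ up to a boundary correction, together with the fact that the coordinate $\widehat{p}_0(t)$ decouples and decays explicitly.

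Concretely, I would first read off from the $n=0$ row of $\widehat{Q}$ in \eqref{eq:Q_hat} that $\widehat{p}'_0(t) = -(\mu-1)\,\widehat{p}_0(t)$, hence $\widehat{p}_0(t) = \widehat{p}_0(0)\,\expo^{-(\mu-1)\,t}$. Next, applying \eqref{eq:weak_Poincare} to $\widehat{{\bf p}}(t)$ gives $(\mu-1)\sum_{n\ge 1}\overbar{p}_n\,|D^+(\widehat{p}_n/\overbar{p}_n)|^2 \ge 2\,\HH(t) - \widehat{p}^2_0$, and substituting this into \eqref{eq:IBP_linear_corollary} yields
\begin{equation*}
\HH'(t) \le -2\,\HH(t) + \widehat{p}^2_0 + (\mu-1)\,\widehat{p}_0\,\frac{\widehat{p}_1}{\overbar{p}_1} - (\mu-1)\,\widehat{p}_0.
\end{equation*}
I would then bound the three residual boundary terms: the last one is nonpositive since $\widehat{p}_0 \ge 0$ and is simply dropped; using $\widehat{p}_1 \le 1$ and $\overbar{p}_1 = \expo^{-(\mu-1)}$ gives $(\mu-1)\,\widehat{p}_0\,\widehat{p}_1/\overbar{p}_1 \le (\mu-1)\,\expo^{\mu-1}\,\widehat{p}_0$; and since $\widehat{p}_0(0)\le 1$ one has $\widehat{p}^2_0 = \widehat{p}^2_0(0)\,\expo^{-2(\mu-1)t} \le \widehat{p}_0(0)\,\expo^{-(\mu-1)t}$. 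Collecting these and inserting the closed form for $\widehat{p}_0(t)$ produces exactly
\begin{equation*}
\HH'(t) \le -2\,\HH(t) + K_\mu\,\expo^{-(\mu-1)\,t}, \qquad K_\mu = \bigl(1 + (\mu-1)\,\expo^{\mu-1}\bigr)\,\widehat{p}_0(0).
\end{equation*}

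To finish, I would integrate this linear differential inequality with the integrating factor $\expo^{2t}$, so that $\HH(t) \le \HH(0)\,\expo^{-2t} + K_\mu\,\expo^{-2t}\int_0^t \expo^{(3-\mu)s}\,\dd s$. The inner integral equals $(\expo^{(3-\mu)t}-1)/(3-\mu)$ when $\mu \ne 3$ and equals $t$ in the resonant case $\mu = 3$, reproducing the two branches of \eqref{eq:exponential_decay_Qhat}. For the $\mathcal H$-norm estimate \eqref{eq:exponential_decay_Hnorm} I would use that $\overbar{p}_0 = 0$, so the $n=0$ coordinate of the scalar product \eqref{eq:scalar_prod_H} contributes $(\widehat{p}_0-\overbar{p}_0)^2 = \widehat{p}^2_0$ and hence $\|\widehat{{\bf p}}(t)-\overbar{{\bf p}}\|^2_{\mathcal H} = \widehat{p}^2_0(0)\,\expo^{-2(\mu-1)t} + 2\,\HH(t)$; plugging in the bound on $\HH(t)$ gives \eqref{eq:exponential_decay_Hnorm}. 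The compact form \eqref{eq:exponential_decay_Hnorm_compact_form} then follows by isolating the slowest-decaying contribution in each regime, after checking the sign bookkeeping in $(\expo^{-2t}-\expo^{-(\mu-1)t})/(\mu-3)$, which stays nonnegative on both sides of $\mu=3$: the rate $\expo^{-(\mu-1)t}$ dominates for $1<\mu<3$, the resonant factor $t\,\expo^{-2t}$ dominates for $\mu=3$, and $\expo^{-2t}$ dominates for $\mu>3$.

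The main obstacle is not the Gronwall step but the control of the boundary terms in \eqref{eq:IBP_linear_corollary}: because the sum there runs from $n=1$, the dissipation identity is not clean and carries the residual contributions $(\mu-1)\widehat{p}_0\,\widehat{p}_1/\overbar{p}_1 - (\mu-1)\widehat{p}_0$ at the boundary $n=0$. These obstruct a pure exponential decay $\HH' \le -2\,\HH$ and must instead be absorbed into a time-dependent forcing, which is precisely where the explicit law $\widehat{p}_0(t)=\widehat{p}_0(0)\,\expo^{-(\mu-1)t}$ is indispensable. The second delicate point is the resonance at $\mu=3$, where the forcing rate $\mu-1$ coincides with the intrinsic decay rate $2$ of $\HH$; there the convolution integral degenerates and produces the algebraic prefactor $t$, accounting for the separate $\mu=3$ branch appearing throughout \eqref{eq:exponential_decay_Qhat}--\eqref{eq:exponential_decay_Hnorm_compact_form}.
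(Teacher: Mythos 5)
Your proposal is correct and follows essentially the same route as the paper: the explicit decay $\widehat{p}_0(t)=\widehat{p}_0(0)\,\expo^{-(\mu-1)t}$, the substitution of the weak Poincar\'e inequality \eqref{eq:weak_Poincare} into the dissipation identity \eqref{eq:IBP_linear_corollary} to get $\HH' \le -2\,\HH + K_\mu\,\expo^{-(\mu-1)t}$, Gronwall, and the decomposition $\|\widehat{{\bf p}}(t)-\overbar{{\bf p}}\|^2_{\mathcal H} = \widehat{p}_0^2(t) + 2\,\HH(t)$. The only (cosmetic) difference is that you spell out the resonant case $\mu=3$, which the paper dismisses as "handled by a similar argument."
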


\begin{proof}
We only treat the case when $\mu \in (1,3) \cup (3,\infty)$ since the other case can be handled using a similar argument. Starting from the identity \eqref{eq:IBP_linear_corollary}, we employ the conclusion of Proposition \ref{prop:Weak_Poincare} and the analytical expression for $\widehat{p}_0(t)$ to deduce that
\begin{equation}\label{eq:H_derivative}
\begin{aligned}
\frac{\dd}{\dd t} \HH &= -(\mu-1)\,\sum\limits_{n=1}^\infty \overbar{p}_n\,\left|D^+\left(\frac{\widehat{p}_n}{\overbar{p}_n}\right)\right|^2 + (\mu-1)\,\widehat{p}_0\,\frac{\widehat{p}_1}{\overbar{p}_1} - (\mu-1)\,\widehat{p}_0 \\
&\leq -2\,\HH + \widehat{p}^2_0 + (\mu-1)\,\widehat{p}_0\,\frac{\widehat{p}_1}{\overbar{p}_1} - (\mu-1)\,\widehat{p}_0 \\
&\leq -2\,\HH + \widehat{p}_0 + (\mu-1)\,\expo^{\mu-1}\,\widehat{p}_0 \\
&= -2\,\HH + K_\mu\,\expo^{-(\mu-1)\,t}.
\end{aligned}
\end{equation}
As a consequence of Grownall's inequality, we end up with
\begin{equation*}
\begin{aligned}
\HH(t) &\leq \HH(0)\,\expo^{-2\,t} + K_\mu\,\int_0^t \expo^{-2\,(t-s)}\,\expo^{-(\mu-1)\,s}\,\dd s \\
&= \HH(0)\,\expo^{-2\,t} + K_\mu\,\expo^{-2\,t}\,\int_0^t \expo^{-(\mu-3)\,s}\,\dd s \\
&= \HH(0)\,\expo^{-2\,t} + \frac{K_\mu}{\mu-3}\,\left(\expo^{-2\,t} - \expo^{-(\mu-1)\,t}\right).
\end{aligned}
\end{equation*}
Since \[\|\widehat{{\bf p}}(t) - \overbar{{\bf p}}\|^2_{\mathcal H} = \widehat{p}^2_0(t) + \sum_{n=1}^\infty \frac{(\widehat{p}_n(t)-\overbar{p}_n)^2}{\overbar{p}_n} = \widehat{p}^2_0(0)\,\expo^{-2\,(\mu-1)\,t} + 2\,\HH(t),\] the bound \eqref{eq:exponential_decay_Hnorm} follows readily from \eqref{eq:exponential_decay_Qhat}.
\end{proof}

Next, we prove our main result in this section, regarding the large time asymptotic of solutions to the original system \eqref{eq:main_ODE}.

\begin{theorem}\label{thm:3}
Assume that ${\bf p}(t)$ is a solution of the system \eqref{eq:main_ODE} with ${\bf p}(0) \in \mathcal{S}_\mu$ and $\mu \in (1,\infty)$, then for all $t \geq 0$ we have
\begin{equation}\label{eq:bound_main}
\|{\bf p}(t) - \overbar{{\bf p}}\|^2_{\mathcal H} \leq \left\{\begin{aligned}
&C\,t\,\expo^{-(\mu-1)\,t} \quad \textrm{if~} 1 < \mu < 3,\\
&C\,t^2\,\expo^{-2\,t} \quad \text{if~} \mu = 3, \\
&C\,\expo^{-2\,t} \quad \text{if~} \mu > 3.
\end{aligned}
\right.
\end{equation}
\end{theorem}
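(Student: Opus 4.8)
The plan is to treat \eqref{eq:main_ODE} as a perturbation of the linear system \eqref{eq:linear_ODE_hat}, exploiting the crucial fact that $p_0(t)$ is known explicitly from \eqref{eq:p_0;case1} and satisfies $0 \le p_0(t) \le C\,\expo^{-(\mu-1)t}$ for $\mu > 1$. Writing $\widehat{S}(t) = \expo^{t\widehat{Q}}$ for the linear, mass-preserving semigroup generated by $\widehat{Q}$ (Lemma \ref{lem:decomposition_of_Q}) and recalling that $\widehat{Q}[\overbar{{\bf p}}] = {\bf 0}$, hence $\widehat{S}(t)\overbar{{\bf p}} = \overbar{{\bf p}}$, Duhamel's formula applied to \eqref{eq:main_ODE} yields
$$ {\bf p}(t) - \overbar{{\bf p}} = \left(\widehat{S}(t){\bf p}(0) - \overbar{{\bf p}}\right) - \int_0^t \widehat{S}(t-s)\,\big[p_0(s)\,\mathcal{D}^-[{\bf p}(s)]\big]\,\dd s. $$
The first (homogeneous) term is controlled directly by Proposition \ref{prop:linear_ODE}, whose estimate \eqref{eq:exponential_decay_Hnorm_compact_form} already exhibits the intrinsic relaxation rate of the linear flow in the squared $\mathcal H$-norm, namely $\expo^{-(\mu-1)t}$ for $1<\mu<3$, the resonant $t\,\expo^{-2t}$ at $\mu=3$, and $\expo^{-2t}$ for $\mu>3$. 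Everything then hinges on estimating the perturbation integral.

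For the integral I would proceed in three steps. First, since $\mathcal{D}^-[{\bf p}] = {\bf p} - \mathcal{R}[{\bf p}]$ with both ${\bf p}(s)$ and $\mathcal{R}[{\bf p}(s)]$ genuine probability distributions on $\mathbb{N}$, I can apply the linear decay estimate of Proposition \ref{prop:linear_ODE} to each of $\widehat{S}(t-s){\bf p}(s)$ and $\widehat{S}(t-s)\mathcal{R}[{\bf p}(s)]$ (both relax to $\overbar{{\bf p}}$), so that $\|\widehat{S}(t-s)\mathcal{D}^-[{\bf p}(s)]\|_{\mathcal H}$ decays in $t-s$ at the linear rate encoded by \eqref{eq:exponential_decay_Hnorm_compact_form}. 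Second, I would feed in the explicit bound $p_0(s) \le C\,\expo^{-(\mu-1)s}$. Third, and this is the step requiring genuine work, I need a uniform-in-time a priori bound on the weighted norm $\|\mathcal{D}^-[{\bf p}(s)]\|_{\mathcal H}$ (equivalently on $\sum_{n\ge 1}\overbar{p}_n^{-1}(p_n(s)-p_{n-1}(s))^2$), which I would obtain by propagating the $\mathcal H$-energy along the flow using the Fokker-Planck structure of Lemma \ref{lem:FK_Qhat} together with the integration-by-parts identity of Corollary \ref{Cor:1} and the weak Poincare inequality of Proposition \ref{prop:Weak_Poincare}, exactly as in the proof of Proposition \ref{prop:linear_ODE} but now carrying the extra $-p_0(t)\,\mathcal{D}^-$ term.

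Granting these ingredients, the perturbation term becomes a convolution of the linear decay kernel against the exponentially small factor $\expo^{-(\mu-1)s}$, and evaluating $\int_0^t K(t-s)\,\expo^{-(\mu-1)s}\,\dd s$ produces the three regimes in \eqref{eq:bound_main}: when the perturbation rate $\mu-1$ is smaller than the linear rate $2$ (i.e.\ $1<\mu\le 3$) the two rates resonate and the convolution contributes an extra algebraic prefactor in $t$, one power on top of the linear behaviour for $1<\mu<3$, and a second power at $\mu=3$ where the linear kernel \emph{itself} already carries a factor $t$, whereas for $\mu>3$ the perturbation decays strictly faster than the linear flow and the rate $\expo^{-2t}$ is untouched. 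I expect the main obstacle to be the third step above: establishing the uniform a priori control of $\|\mathcal{D}^-[{\bf p}(s)]\|_{\mathcal H}$ and, relatedly, extending the linear decay estimate so that it applies along the nonautonomous trajectory generated by $\widehat{Q} - p_0(t)\,\mathcal{D}^-$; the stacking of resonances at $\mu=3$ that yields the $t^2$ prefactor will also demand careful bookkeeping rather than a single application of Gronwall's inequality.
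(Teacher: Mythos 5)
Your proposal follows essentially the same route as the paper's proof: Duhamel's principle for \eqref{eq:main_ODE}, splitting $\mathcal{D}^-[{\bf p}(s)] = \left({\bf p}(s)-\overbar{{\bf p}}\right) - \left(\mathcal{R}[{\bf p}(s)]-\overbar{{\bf p}}\right)$ so that the linear decay estimate \eqref{eq:exponential_decay_Hnorm_compact_form} of Proposition \ref{prop:linear_ODE} applies to each piece, inserting the explicit bound $p_0(s) \leq C\,\expo^{-(\mu-1)\,s}$ from \eqref{eq:p_0;case1}, and evaluating the resulting convolution to obtain the three regimes, including the extra factor of $t$ from the resonance at rate $\mu-1$ when $1<\mu<3$ and the $t^2$ prefactor at $\mu=3$. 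The one point you flag as the main obstacle --- a uniform-in-$s$ a priori control of the $\mathcal H$-norms entering the constants when Proposition \ref{prop:linear_ODE} is applied along the trajectory --- is not carried out in the paper either (it is silently absorbed into the generic constant $C$), so your treatment is, if anything, more explicit about this gap than the published argument.
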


\begin{proof}
By Duhamel's principle, we can write the solution to \eqref{eq:main_ODE} as
\begin{equation*}
{\bf p}(t) = \expo^{t\,\widehat{Q}}\,{\bf p}(0) - \int_0^t p_0(s)\,\expo^{(t-s)\,\widehat{Q}}\,\mathcal{D}^-[{\bf p}(s)]\,\dd s.
\end{equation*}
Thus, we can bound $\|{\bf p}(t) - \overbar{{\bf p}}\|^2_{\mathcal H}$ as follows:
\begin{align*}
\|{\bf p}(t) - \overbar{{\bf p}}\|^2_{\mathcal H} &\leq 2\,\|\expo^{t\,\widehat{Q}}\,{\bf p}(0) - \overbar{{\bf p}}\|^2_{\mathcal H} + 2\,\int_0^t p_0(s)\,\|\expo^{(t-s)\,\widehat{Q}}\,\mathcal{D}^-[{\bf p}(s)]\|^2_{\mathcal H}\, \dd s \\
&\leq 2\,\|\expo^{t\,\widehat{Q}}\,{\bf p}(0) - \overbar{{\bf p}}\|^2_{\mathcal H} + 4\,\int_0^t p_0(s)\,\|\expo^{(t-s)\,\widehat{Q}}{\bf p}(s) - \overbar{{\bf p}}\|^2_{\mathcal H}\,\dd s \\
&\qquad  + 4\,\int_0^t p_0(s)\,\|\expo^{(t-s)\,\widehat{Q}}\mathcal{R}[{\bf p}(s)] - \overbar{{\bf p}}\|^2_{\mathcal H}\,\dd s.
\end{align*}
Using the estimates \eqref{eq:exponential_decay_Hnorm_compact_form} from Proposition \ref{prop:linear_ODE} and recall the known formula for $p_0(t)$ \eqref{eq:p_0;case1}, we have, for $\mu \in (1,3)$, that
\begin{align*}
\|{\bf p}(t) - \overbar{{\bf p}}\|^2_{\mathcal H} &\leq C\,\expo^{-(\mu-1)\,t} + C\,\int_0^t \expo^{-(\mu-1)\,s}\,\expo^{-(\mu-1)\,(t-s)}\,\dd s\\
&= C\,\expo^{-(\mu-1)\,t} + C\,\int_0^t \expo^{-(\mu-1)\,t}\,\dd s \\
&\leq C\,t\,\expo^{-(\mu-1)\,t}.
\end{align*}
Similarly, for $\mu = 3$ we have
\begin{align*}
\|{\bf p}(t) - \overbar{{\bf p}}\|^2_{\mathcal H} &\leq C\,t\,\expo^{-2\,t} + C\,\int_0^t \expo^{-2\,s}\,t\,\expo^{-(\mu-1)\,(t-s)}\,\dd s\\
&= C\,\expo^{-(\mu-1)\,t} + C\,\int_0^t t\,\expo^{-2\,t}\,\dd s \\
&\leq C\,t^2\,\expo^{-(\mu-1)\,t}.
\end{align*}
Finally, for $\mu \in (3,\infty)$ we obtain
\begin{align*}
\|{\bf p}(t) - \overbar{{\bf p}}\|^2_{\mathcal H} &\leq C\,\expo^{-2\,t} + C\,\int_0^t \expo^{-(\mu-1)\,s}\,\expo^{-2\,(t-s)}\,\dd s\\
&= C\,\expo^{-2\,t} + C\,\expo^{-2\,t}\,\int_0^t \expo^{-(\mu-3)\,s}\,\dd s \\
&\leq C\,\expo^{-2\,t}.
\end{align*}
Thus the proof of Theorem \ref{thm:3} is completed.
\end{proof}

To illustrate the quantitative convergence of ${\bf p}(t)$ towards the modified Poisson distribution $ \overbar{{\bf p}}$ we plot the evolution of $\|{\bf p}(t) - \overbar{{\bf p}}\|_{\mathcal H}$ and $\|{\bf p}(t) - \overbar{{\bf p}}\|_{\ell^2}$ over time (see figure \ref{fig:cv_energy}) for two different choices of $\mu > 1$, under the same settings as the previous subsection. It is readily observed that the decay of both energies are exponential with respect to time as we employ the semi-logarithmic scale.

\begin{figure}[!htb]
  \begin{subfigure}{0.45\textwidth}
    \centering
    \includegraphics[scale=0.8]{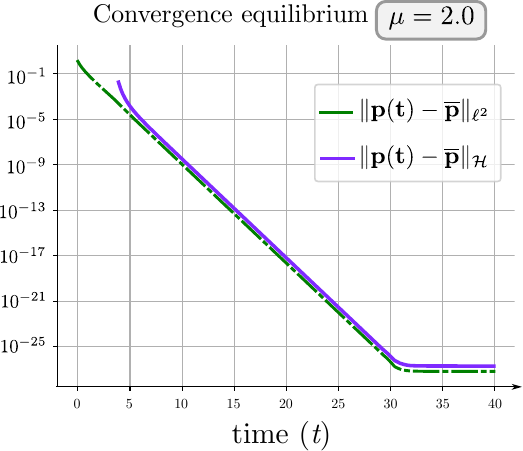}
  \end{subfigure}
  \hspace{0.1in}
  \begin{subfigure}{0.45\textwidth}
    \centering
    \includegraphics[scale=0.8]{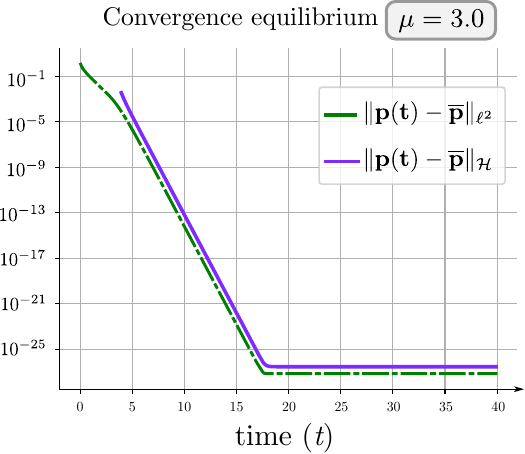}
  \end{subfigure}
  \caption{{\bf Left}: Decay of $\|{\bf p}(t) - \overbar{{\bf p}}\|_{\mathcal H}$ and $\|{\bf p}(t) - \overbar{{\bf p}}\|_{\ell^2}$ in semilog scale when $\mu = 2$. {\bf Right}: Decay of $\|{\bf p}(t) - \overbar{{\bf p}}\|_{\mathcal H}$ and $\|{\bf p}(t) - \overbar{{\bf p}}\|_{\ell^2}$ in semilog scale when $\mu = 3$. In both cases, the decay is exponential with respect to time.}
  \label{fig:cv_energy}
\end{figure}

\begin{remark}
As will be discussed in detail below, when $p_0(0) = 0$ (whence $p_0(t) = 0$ for all $t \geq 0$), the system \eqref{eq:law_limit_repeat}-\eqref{eq:Q_rewrite} becomes linear and $\|{\bf p}(t) - \overbar{{\bf p}}\|^2_{\mathcal H} = \sum\limits_{n=1}^\infty \frac{(p_n(t)-\overbar{p}_n)^2}{\overbar{p}_n}$. By following the argument we have presented so far (while keeping in mind that $p_0(t) = 0$ for all $t$), we can also deduce that
\begin{equation}\label{eq:main_result_p0=0}
\|{\bf p}(t) - \overbar{{\bf p}}\|^2_{\mathcal H} \leq \|{\bf p}(0) - \overbar{{\bf p}}\|^2_{\mathcal H}\,\expo^{-2\,t}.
\end{equation}
\end{remark}

\begin{remark}
Intuitively speaking, the content of Theorem \ref{thm:3} suggests that the rate of convergence of $\|{\bf p}(t) - \overbar{{\bf p}}\|^2_{\mathcal H}$ will be improved as $\mu$ increases, and such improvement is saturated when $\mu$ becomes large enough (i.e., when $\mu > 3$ according to our analytical prediction). We also remark that the larger values of $\mu$ implies faster convergence of $p_0$ to zero.
\end{remark}

To finish our analytical study of the ODE system \eqref{eq:law_limit_repeat}-\eqref{eq:Q_rewrite}, we give an alternative derivation of the bound \eqref{eq:main_result_p0=0} when the initial datum ${\bf p}(0)$ associated with \eqref{eq:law_limit_repeat}-\eqref{eq:Q_rewrite} is chosen such that $p_0(0) = 0$. As the argument to be presented below will be based on the Bakry-Émery approach \cite{bakry_diffusions_1985} and a similar procedure, although less transparent compared to the forthcoming ``brute-force'' computations, has been carried out in \cite{cao_derivation_2021} for a related model, we only provide a sketch of the proof of \eqref{eq:main_result_p0=0}.

We start with a Fokker-Planck formulation of the system \eqref{eq:law_limit_repeat}-\eqref{eq:Q_rewrite} using the notations prepared in Definition \ref{def:2}.

\begin{lemma}\label{lem:x}
Assume that ${\bf p}(t)$ is a solution of the ODE system \eqref{eq:law_limit_repeat}-\eqref{eq:Q_rewrite}, then for all $n \in \mathbb N$
\begin{equation}\label{eq:FK_nonlinear}
p'_n = Q[{\bf p}]_n =  (\mu-1)\,D^-\left(q_n\,D^+ \left(\frac{p_n}{q_n}\right)\right),
\end{equation}
where ${\bf q} \in \mathbb{P}(\mathbb N)$ is defined via $q_0 = 0$ and $q_n = \frac{(\mu-1+p_0)^{n-1}}{(n-1)!}\,\expo^{-(\mu-1+p_0)}$ for $n\geq 1$. In particular, when $p_0(0) = 0$, we have $p_0(t) = 0$ and ${\bf q}(t) = \overbar{{\bf p}}$ for all $t \geq 0$. Thus \eqref{eq:FK_nonlinear} reduces to
\begin{equation}\label{eq:FK_linear}
p'_n = Q[{\bf p}]_n =\left\{\begin{aligned}
&0, \quad n=0,\\
&(\mu-1)\,D^-\left(\overbar{p}_n\,D^+ \left(\frac{p_n}{\overbar{p}_n}\right)\right), \quad n \geq 1.
\end{aligned}
\right.
\end{equation}
\end{lemma}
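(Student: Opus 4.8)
The plan is to observe that the nonlinear operator $Q$ in \eqref{eq:Q_rewrite} is structurally identical to the linear operator $\widehat{Q}$ of \eqref{eq:Q_hat}, with the constant $\mu-1$ replaced everywhere by the state-dependent quantity $\mu-1+p_0$, and that the auxiliary sequence ${\bf q}$ is exactly the modified Poisson distribution $\overbar{{\bf p}}$ \eqref{eq:modified_Poisson} with the same substitution $\mu-1 \mapsto \mu-1+p_0$. Granting this, the computation in the proof of Lemma \ref{lem:FK_Qhat} carries over essentially verbatim once one freezes $p_0 = p_0(t)$ at each fixed time $t$: since \eqref{eq:FK_nonlinear} is a pointwise-in-$n$ algebraic identity for the right-hand side $Q[{\bf p}(t)]$, the fact that ${\bf q}$ depends on $t$ through $p_0(t)$ poses no difficulty. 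I note in passing that for the identity to reproduce $Q$ for general $p_0$, the multiplicative prefactor in \eqref{eq:FK_nonlinear} must be the effective rate $\mu-1+p_0$, which coincides with $\mu-1$ precisely in the regime $p_0 = 0$ relevant to \eqref{eq:FK_linear}.

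Concretely, I would first record the ratio identity $\frac{q_n}{q_{n+1}} = \frac{n}{\mu-1+p_0}$ for $n \geq 1$, which is the exact analogue of $\frac{\overbar{p}_n}{\overbar{p}_{n+1}} = \frac{n}{\mu-1}$ used in Lemma \ref{lem:FK_Qhat}. Expanding the right-hand side of \eqref{eq:FK_nonlinear} for $n \geq 1$ with the discrete operators of Definition \ref{def:2} and inserting this ratio collapses the telescoping discrete derivatives and produces
\begin{equation*}
(\mu-1+p_0)\,D^-\!\left(q_n\,D^+\!\left(\tfrac{p_n}{q_n}\right)\right) = n\,p_{n+1} + (\mu-1+p_0)\,p_{n-1} - (n-1)\,p_n - (\mu-1+p_0)\,p_n,
\end{equation*}
which is precisely $Q[{\bf p}]_n$. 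For the boundary index $n=0$ I would evaluate the single term $q_0\,D^+(p_0/q_0)$ using $q_0 = 0$ together with the convention $\frac{a}{a}=1$ at $a=0$ (so that this term equals $-p_0$), exactly as the case $\overbar{p}_0 = 0$ was handled in Lemma \ref{lem:FK_Qhat}; this reproduces $Q[{\bf p}]_0 = -(\mu-1+p_0)\,p_0$. The first interior index $n=1$ deserves a quick separate check because it also invokes $D^- q_0$ and the same convention, but it goes through identically.

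For the \emph{in particular} assertion, I would show that $p_0(0)=0$ forces $p_0(t)\equiv 0$. This is immediate from the explicit formula \eqref{eq:p_0;case1} upon setting $p_0(0)=0$, or directly from the scalar equation $p_0' = -(\mu-1+p_0)\,p_0$ (the $n=0$ line of \eqref{eq:Q_rewrite}), for which the constant $p_0 \equiv 0$ is the unique solution issuing from the origin. Once $p_0 \equiv 0$, the effective rate $\mu-1+p_0$ collapses to $\mu-1$, so $q_n = \overbar{p}_n$ for every $n$ and ${\bf q}(t) = \overbar{{\bf p}}$ for all $t \geq 0$; substituting this into \eqref{eq:FK_nonlinear} yields \eqref{eq:FK_linear}, with the trivial $n=0$ line $p_0' = 0$.

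I do not expect a substantive obstacle, since the lemma reduces to an algebraic verification patterned on Lemma \ref{lem:FK_Qhat}. The only genuine care is the boundary bookkeeping at $n=0$ and $n=1$, where the convention $q_0/q_0 = 1$ (rather than treating $q_0 = 0$ naively) is what makes the gain and loss terms balance, together with the mild observation that freezing the time-dependent $p_0(t)$ is legitimate because \eqref{eq:FK_nonlinear} is checked pointwise in $n$ at each fixed $t$.
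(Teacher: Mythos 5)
Your proposal is correct and follows essentially the route the paper intends: Lemma \ref{lem:x} is stated there without proof, the implicit argument being exactly the componentwise computation of Lemma \ref{lem:FK_Qhat} with $\mu-1$ replaced by the frozen quantity $\mu-1+p_0(t)$, which is what you carry out, including the boundary bookkeeping at $n=0,1$ and the ODE argument for $p_0\equiv 0$. Your parenthetical observation is also right and worth stressing: with the prefactor $(\mu-1)$ as printed, the right-hand side of \eqref{eq:FK_nonlinear} at $n=0$ evaluates to $-(\mu-1)\,p_0$ while $Q[{\bf p}]_0 = -(\mu-1+p_0)\,p_0$, so the displayed identity holds for general $p_0$ only with the prefactor $\mu-1+p_0$ (a typo in the statement) --- a discrepancy that is immaterial for the case $p_0(0)=0$ in which the lemma is actually invoked in Theorem \ref{thm:4}.
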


\begin{remark}
We remark here that the (time-dependent) probability distribution ${\bf q}$ plays the role of a ``quasi-stationary equilibrium'' for the dynamics \eqref{eq:law_limit_repeat}-\eqref{eq:Q_rewrite}, in the sense that both ${\bf q}$ and $\overbar{{\bf p}}$ take the same form as modified Poisson distributions, but with different mean values. Meanwhile, it is clear that ${\bf q}(t) \xrightarrow{t \to \infty} \overbar{{\bf p}}$ (componentwise convergence), due to the explicit expression for $p_0(t)$.
\end{remark}

Now we employ the Bakry-Émery approach \cite{bakry_diffusions_1985} to prove the following:

\begin{theorem}\label{thm:4}
Suppose that ${\bf p}(t)$ is a solution of the system \eqref{eq:main_ODE} with ${\bf p}(0) \in \mathcal{S}_\mu$ and $\mu \in (1,\infty)$. If $p_0(0) = 0$, then
\begin{equation}\label{eq:Bakry-Emery}
\|{\bf p}(t) - \overbar{{\bf p}}\|^2_{\mathcal H} \leq \|{\bf p}(0) - \overbar{{\bf p}}\|^2_{\mathcal H}\,\expo^{-2\,t}.
\end{equation}
\end{theorem}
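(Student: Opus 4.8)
The plan is to run the Bakry-Émery $\Gamma_2$-calculus on the \emph{linear} Fokker-Planck equation \eqref{eq:FK_linear} produced by Lemma \ref{lem:x}. Since $p_0(0)=0$ forces $p_0(t)\equiv 0$, I first set $h_n(t)\coloneqq p_n(t)/\overbar{p}_n$ for $n\geq 1$ and rewrite \eqref{eq:FK_linear} as the linear evolution $h'=\mathcal L h$, where $(\mathcal L h)_n=\frac{\mu-1}{\overbar{p}_n}\,D^-\!\left(\overbar{p}_n\,D^+ h_n\right)$. This $\mathcal L$ is self-adjoint and non-positive on the weighted space $\ell^2(\overbar{{\bf p}})$ with inner product $\langle f,g\rangle=\sum_{n\geq 1}\overbar{p}_n f_n g_n$; after the index shift $m=n-1$ it is exactly the birth-death generator of the $\mathrm{M}/\mathrm{M}/\infty$ queue whose invariant law is $\mathrm{Poisson}(\mu-1)$ (note the death rate $n-1$ vanishes at $n=1$, so the chain stays on $\{1,2,\dots\}$, matching the support of $\overbar{{\bf p}}$). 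Because $\sum_{n\geq 1}\overbar{p}_n h_n=\sum_{n\geq 1}p_n=1$, the target quantity is exactly a variance, $\|{\bf p}(t)-\overbar{{\bf p}}\|^2_{\mathcal H}=\sum_{n\geq 1}\overbar{p}_n(h_n-1)^2=\Var_{\overbar{{\bf p}}}(h(t))$, so \eqref{eq:Bakry-Emery} is the assertion that this variance contracts at rate $2$.

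Next I would introduce the carré du champ $\Gamma(f)\coloneqq\frac12\big(\mathcal L(f^2)-2f\,\mathcal L f\big)$ and its iterate $\Gamma_2(f)\coloneqq\frac12\big(\mathcal L\,\Gamma(f)-2\,\Gamma(f,\mathcal L f)\big)$. Two exact identities drive the argument. The first-order (energy-dissipation) identity $\frac{\dd}{\dd t}\Var_{\overbar{{\bf p}}}(h)=-2\int\Gamma(h)\,\dd\overbar{{\bf p}}=-2\,\mathcal E(h)$ is just \eqref{eq:IBP_linear_corollary} restricted to $p_0=0$, where $\mathcal E(h)=(\mu-1)\sum_{n\geq1}\overbar{p}_n|D^+h_n|^2$. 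The second-order identity $\frac{\dd}{\dd t}\mathcal E(h)=-2\,\|\mathcal L h\|^2_{\ell^2(\overbar{{\bf p}})}=-2\int\Gamma_2(h)\,\dd\overbar{{\bf p}}$ follows from $h'=\mathcal L h$ and self-adjointness. Everything therefore reduces to verifying the integrated curvature-dimension bound $\int\Gamma_2(h)\,\dd\overbar{{\bf p}}\geq\int\Gamma(h)\,\dd\overbar{{\bf p}}$, i.e. the $CD(1,\infty)$ condition for $\overbar{{\bf p}}$.

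The hard part is this curvature verification. Writing things out gives $\Gamma(f)_n=\frac12\big[(\mu-1)(D^+f_n)^2+(n-1)(D^-f_n)^2\big]$, and the real labor is expanding $\Gamma_2(f)_n$ as a quadratic form in the finite differences $D^+f$, $D^-f$ and $\Delta f$ and showing it dominates $\Gamma(f)_n$ after integration against $\overbar{{\bf p}}$. I expect the clean route is to exploit that the (shifted) Poisson measure is the classical setting in which the $\Gamma_2$-calculus yields spectral gap exactly $1$: the Charlier polynomials are eigenfunctions of $\mathcal L$ with eigenvalues $0,-1,-2,\dots$, so the estimate mirrors the computation already performed in \cite{cao_derivation_2021} for the closely related poor-biased model. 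The one subtlety to flag is that a \emph{pointwise} inequality $\Gamma_2\geq\Gamma$ can fail in discrete settings, so the argument must be conducted at the level of the integrated (averaged) curvature, which is precisely what the dissipation identities demand.

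Granting the curvature bound, the conclusion is routine. The inequality $\frac{\dd}{\dd t}\mathcal E(h)\leq-2\,\mathcal E(h)$ gives $\mathcal E(h(t))\leq\mathcal E(h(0))\,\expo^{-2t}$, and integrating $\frac{\dd}{\dd t}\Var_{\overbar{{\bf p}}}(h)=-2\,\mathcal E(h)$ from $0$ to $\infty$ recovers the Poincaré inequality $\Var_{\overbar{{\bf p}}}(h)\leq\mathcal E(h)$ with sharp constant $1$, consistent with Proposition \ref{prop:Weak_Poincare}. Feeding this back into the first-order identity yields $\frac{\dd}{\dd t}\Var_{\overbar{{\bf p}}}(h(t))=-2\,\mathcal E(h(t))\leq-2\,\Var_{\overbar{{\bf p}}}(h(t))$, and Gronwall's inequality then delivers $\|{\bf p}(t)-\overbar{{\bf p}}\|^2_{\mathcal H}=\Var_{\overbar{{\bf p}}}(h(t))\leq\Var_{\overbar{{\bf p}}}(h(0))\,\expo^{-2t}=\|{\bf p}(0)-\overbar{{\bf p}}\|^2_{\mathcal H}\,\expo^{-2t}$, which is \eqref{eq:Bakry-Emery}.
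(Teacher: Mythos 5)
Your skeleton is the paper's skeleton: with $p_0(0)=0$ the dynamics reduces to the linear Fokker--Planck equation \eqref{eq:FK_linear}, the quantity $\|{\bf p}(t)-\overbar{{\bf p}}\|^2_{\mathcal H}$ is the variance $\Var_{\overbar{{\bf p}}}(h)$ of $h_n=p_n/\overbar{p}_n$, your first dissipation identity is \eqref{eq:IBP_linear_corollary} with the boundary terms dropped, and the theorem reduces to the integrated curvature bound $\int\Gamma_2(h)\,\dd\overbar{{\bf p}}\geq\int\Gamma(h)\,\dd\overbar{{\bf p}}$, equivalently $\|\mathcal L h\|^2_{\ell^2(\overbar{{\bf p}})}\geq\mathcal E(h)$, followed by integration back from infinity and Gronwall. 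All the identities you state are correct, including the birth-death identification of $\mathcal L$ and the formula for $\Gamma$. The genuine gap is that the curvature bound --- which is the entire mathematical content of the theorem --- is never proved: you call it ``the hard part'', describe what expanding $\Gamma_2$ would involve, and then substitute an expectation (``I expect the clean route is\dots'') that the Charlier-polynomial spectral theory of the $\mathrm M/\mathrm M/\infty$ generator, or the computation in \cite{cao_derivation_2021}, will do the job; neither route is carried out. The paper closes precisely this step by hand: setting $V_n=\frac{(n-1)\,n}{2\,(\mu-1)}-n$ and $r_n=p_n/\overbar{p}_n$, it differentiates the dissipation a second time and shows in \eqref{eq:a_bit_long}, after repeated summation by parts, that the residual term equals $\sum_{n\geq 1}\overbar{p}_n\,|D^+(D^+ r_n)|^2\geq 0$. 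That finite-difference computation is exactly what your proposal is missing.

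Two structural remarks. First, your architecture is redundant as stated: if you grant yourself the Charlier diagonalization of $\mathcal L$ (eigenvalues $0,-1,-2,\dots$, complete in $\ell^2(\overbar{{\bf p}})$), then the spectral theorem gives $\Var_{\overbar{{\bf p}}}(h(t))\leq\expo^{-2\,t}\,\Var_{\overbar{{\bf p}}}(h(0))$ in one line, and the entire $\Gamma_2$ detour --- including the re-derivation of the Poincar\'e inequality by integrating over $(0,\infty)$, which moreover silently assumes $\Var_{\overbar{{\bf p}}}(h(t))\to 0$ --- becomes superfluous. Second, within this paper the Poincar\'e inequality you aim to recover is already on record: Proposition \ref{prop:Weak_Poincare} specialized to $p_0=0$ reads $\Var_{\overbar{{\bf p}}}(h)\leq\mathcal E(h)$, and feeding it into the dissipation identity plus Gronwall yields \eqref{eq:Bakry-Emery} immediately; this shortcut is the content of the remark containing \eqref{eq:main_result_p0=0}. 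The purpose of Theorem \ref{thm:4} is to give a self-contained Bakry--\'Emery proof that does not import the Poisson--Poincar\'e inequality of \cite{boucheron_concentration_2013}; an argument that outsources its key inequality to classical Poisson spectral theory is logically admissible as a proof of the statement, but it bypasses, rather than supplies, the computation the theorem is meant to provide.
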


\begin{proof}
We present a more straightforward implementation of the Bakry-Émery method compared to the version provided in \cite{cao_derivation_2021} for the study of a related model. Let $V_n = \frac{(n-1)\,n}{2\,(\mu-1)} - n$ for $n \in \mathbb N$. We can rewrite \eqref{eq:FK_linear} as
\begin{equation*}
p'_n = (\mu-1)\,D^-\left(\frac{n}{\mu-1}\,D^+ p_n + p_n\,D^+ V_n\right), \quad n \geq 1.
\end{equation*}
A similar computation shown in the proof of \eqref{eq:IBP_linear_corollary} gives us
\begin{equation*}
\frac 12\,\frac{\dd}{\dd t} \|{\bf p} - \overbar{{\bf p}}\|^2_{\mathcal H} = \frac 12\,\frac{\dd}{\dd t} \sum\limits_{n=1}^\infty \frac{(p_n(t)-\overbar{p}_n)^2}{\overbar{p}_n} = -(\mu-1)\,\sum\limits_{n=1}^\infty \overbar{p}_n\,|D^+ r_n|^2 \leq 0,
\end{equation*}
where we defined $r_n = \frac{p_n}{\overbar{p}_n}$ for $n\geq 1$. To show exponential decay of the ``$\chi^2$ distance'' $\|{\bf p} - \overbar{{\bf p}}\|^2_{\mathcal H}$, the key idea, due to the seminar work of Bakry and Émery \cite{bakry_diffusions_1985}, is to compute the second time derivative of $\|{\bf p} - \overbar{{\bf p}}\|^2_{\mathcal H}$ and link it to its first time derivative. The following computations inherit the same spirit as the entropy methods presented in \cite{jungel_entropy_2016,toscani_entropy_1999} for other models where the state space is continuous:
\begin{equation}\label{eq:a_bit_long}
\begin{aligned}
&\frac 12\,\frac{\dd^2}{\dd t^2} \|{\bf p}-\overbar{{\bf p}}\|^2_{\mathcal H} = -2\,(\mu-1)^2\,\sum_{n\geq 1} \overbar{p}_n\,D^+ r_n\,D^+\left(\frac{D^-(\overbar{p}_n\,D^+ r_n)}{\overbar{p}_n}\right) \\
&= -2\,(\mu-1)^2\,\sum_{n\geq 1} \overbar{p}_n\,D^+ r_n\,D^+\left(\frac{\overbar{p}_{n-1}\,\Delta r_n + D^+ \overbar{p}_{n-1}\,D^+ r_n}{\overbar{p}_n}\right) \\
&= -2\,(\mu-1)^2\,\sum_{n\geq 1}\overbar{p}_n\,D^+ r_n\,\left(D^+\left(\frac{n-1}{\mu-1}\,\Delta r_n\right)-D^+\left(D^+ V_{n-1}\,D^+ r_n)\right)\right) \\
&= -2\,(\mu-1)^2\,\sum_{n\geq 1} \overbar{p}_n\,D^+ r_n\,\left(D^+\left(\frac{n-1}{\mu-1}\,\Delta r_n\right)-D^+(D^+ r_n)\,D^+ V_n - \underbrace{D^+(D^+  V_{n-1})}_{= \frac{1}{\mu-1}}\,D^+ r_n\right)\\
&= -2\,(\mu-1)^2\,\sum_{n\geq 1} \overbar{p}_n\,D^+ r_n\,\left(D^+\left(\frac{n-1}{\mu-1}\,\Delta r_n\right)-D^+(D^+ r_n)\,D^+ V_n\right) + 2\,(\mu-1)\,\sum_{n\geq 1} \overbar{p}_n\,|D^+ r_n|^2 \\
&= -2\,(\mu-1)^2\,\sum_{n\geq 1} \overbar{p}_n\,D^+ r_n\,\left(D^+\left(\frac{n-1}{\mu-1}\,\Delta r_n\right)-D^+(D^+ r_n)\,D^+ V_n\right) - \frac{\dd}{\dd t} \|{\bf p} - \overbar{{\bf p}}\|^2_{\mathcal H} \\
&\geq -\frac{\dd}{\dd t} \|{\bf p} - \overbar{{\bf p}}\|^2_{\mathcal H},
\end{aligned}
\end{equation}
where the last inequality follows from the non-negativity of
\begin{align*}
I &\coloneqq -\sum_{n\geq 1} \overbar{p}_n\,D^+ r_n\,\left(D^+\left(\frac{n-1}{\mu-1}\,\Delta r_n\right)-D^+(D^+ r_n)\,D^+ V_n\right) \\
&= -\sum_{n\geq 1} \overbar{p}_n\,D^+ r_n\,\left(\frac{1}{\mu-1}\,\Delta r_{n+1} + \frac{n-1}{\mu-1}\,D^+(\Delta r_n)- D^+(D^+ r_n)\,\underbrace{D^+ V_n}_{= \frac{n}{\mu-1}-1}\right) \\
&= -\sum_{n\geq 1} \overbar{p}_n\,D^+ r_n\,\left(D^+(D^+ r_n)-\frac{n-1}{\mu-1}\,D^+(D^+ r_{n-1})\right) \\
&= -\sum_{n\geq 1} D^+ r_n\,D^-\left(\overbar{p}_n\,D^+(D^+ r_n)\right) = \sum_{n\geq 1} \overbar{p}_n\,|D^+(D^+ r_n)|^2 \geq 0.
\end{align*}
Integrating both sides of \eqref{eq:a_bit_long} over $(t,\infty)$, and using that \[\lim_{t \to \infty} \|{\bf p}(t)-\overbar{{\bf p}}\|^2_{\mathcal H} = 0 \quad \text{and} \quad \lim_{t \to \infty} \frac{\dd}{\dd t} \|{\bf p}(t)-\overbar{{\bf p}}\|^2_{\mathcal H} = 0,\]  we obtain
\begin{equation}\label{eq:Gronwall_inequality}
\frac{\dd}{\dd t} \|{\bf p}(t)-\overbar{{\bf p}}\|^2_{\mathcal H} \leq -2\,\|{\bf p}(t)-\overbar{{\bf p}}\|^2_{\mathcal H} \quad \forall~ t\geq 0.
\end{equation}
As a result of Gronwall's inequality,
\begin{equation}\label{eq:expo_decay}
\|{\bf p}(t) - \overbar{{\bf p}}\|^2_{\mathcal H} \leq \|{\bf p}(0) - \overbar{{\bf p}}\|^2_{\mathcal H}\,\expo^{-2\,t}
\end{equation}
for all $t \geq 0$, which allows us to conclude the proof of Theorem \ref{thm:4}.
\end{proof}

\section{Conclusion and discussion}
\setcounter{equation}{0}

In this manuscript, the so-called sticky dispersion process (on a complete graph with $N$ vertices) is investigated, which can be viewed as a natural modification of the usual dispersion process introduced and studied in a number of recent works \cite{cooper_dispersion_2018,de_dispersion_2023,frieze_note_2018,shang_longest_2020}. Contrary to the purely probabilistic approach documented in the aforementioned papers, our analysis relies on a mean-field analysis where the classical kinetic theory plays a vital role \cite{villani_review_2002}. We emphasize the flexibility of a kinetic approach, which allows us to identify particles as dollars and vertices as agents (or sites), enable us to reformulate the model using languages from econophysics literature \cite{dragulescu_statistical_2000,chakraborti_econophysics_2011,kutner_econophysics_2019,pereira_econophysics_2017,savoiu_econophysics_2013}. Such reinterpretation of the sticky dispersion model under the mean-field limit as $N \to \infty$ motivates the study of the Gini index as a appropriate Lyapunov functional in section \ref{subsec:sec3.1} when $\mu \in (0,1]$.

We also provide a few directions for future work based on this manuscript. First, one can easily derive the mean-field version (as $N \to \infty$) of the usual dispersion process \cite{cooper_dispersion_2018,de_dispersion_2023}, obtaining (using the terminologies from section \ref{subsec:sec1.1}) that
\begin{equation}
  \label{eq:law_limit_dispersion}
  \frac{\dd}{\dd t} {\bf p}(t) = \mathcal{L}[{\bf p}(t)]
\end{equation}
with
\begin{equation}
  \label{eq:L}
  \mathcal{L}[{\bf p}]_n = \left\{
    \begin{array}{lll}
      -\left(\sum_{k\geq 2} k\,p_k\right)\,p_0 & \quad n=0,\\
      2\,p_2 + \left(\sum_{k\geq 2} k\,p_k\right)\,p_0 - \left(\sum_{k\geq 2} k\,p_k\right)\,p_1 & \quad n=1, \\
      (n+1)\,p_{n+1}+\left(\sum_{k\geq 2} k\,p_k\right)\,p_{n-1}- \left(n+\sum_{k\geq 2} k\,p_k\right)\,p_n, & \quad n \geq 2.
    \end{array}
  \right.
\end{equation}
The unique equilibrium solution associated with the system \eqref{eq:law_limit_dispersion}-\eqref{eq:L} can be identified easily. Indeed, the unique equilibrium solution of \eqref{eq:law_limit_dispersion}-\eqref{eq:L} in the space $\mathcal{S}_\mu$, for $\mu \in (0,1]$, is again given by the Bernoulli distribution ${\bf p}^*$ \eqref{eq:Bernoulli_repeat}. On the other hand, for $\mu > 1$, the unique equilibrium solution in the space $\mathcal{S}_\mu$, is provided by the following modified Poisson distribution $\tilde{\bf p}$ with
\begin{equation}\label{eq:modified_Poisson_repeat}
\tilde{p}_0 = 0, \quad \tilde{p}_n = \frac{\nu^{n-1}}{(n-1)!}\,\tilde{p}_1 \quad \text{for } n\geq 1,
\end{equation}
where $\tilde{p}_1 = -W_0\left(-\mu\,\expo^{-\mu}\right)$ (here $W_0(\cdot)$ denotes the principal branch of the Lambert $W$ function \cite{lambert_Observationes_1758}) and $\nu = \mu - \tilde{p}_1$. However, the analytical study of the ODE system \eqref{eq:law_limit_dispersion}-\eqref{eq:L} corresponding to the mean-field version of the classical dispersion process seems to be harder in the sense that $p_0(t)$ is no longer explicit. Moreover, we can no longer prove that the Gini index serves as a Lyapunov functional for the dynamics prescribed by \eqref{eq:law_limit_dispersion}-\eqref{eq:L} when $\mu \leq 1$. We leave the analysis of the asymptotic behavior of solutions of \eqref{eq:law_limit_dispersion}-\eqref{eq:L} to a future work.

A second direction of research may focus on the effect of ``stickiness'' in the model behavior. In our model we only allow one landlord for each house/site (recall that landlords will occupy their houses permanently and only tenants will make a move). It is also possible to allow for more than one landlord for each house. In other words, it might be interesting to assume that each house admits $m$ landlords (with $m \in \mathbb N_+$ measuring the strength of the stickiness effect) and hence the first $m$ agents visiting a given house all become ``permanent residents''. If we adopt a econophysics perspective, the economical counterpart of $m$ is the so-called wealth-flooring policy, which forbids agents whose wealth are below $m$ dollars to lose money (in the form of giving out their dollars to other agents).


\begin{thebibliography}{99}

\bibitem{bakry_diffusions_1985}
Dominique Bakry, and Michel Émery.
\newblock Diffusions hypercontractives.
\newblock In {\em Séminaire de {Probabilités} {XIX} 1983/84}, pages 177--206, Springer, 1985.

\bibitem{boghosian_h_2015}
Bruce M.Boghosian, Merek Johnson, and Jeremy A. Marcq.
\newblock An $H$ Theorem for Boltzmann’s Equation for the Yard-Sale Model of Asset Exchange: The Gini Coefficient as an $H$ Functional.
\newblock {\em Journal of Statistical Physics}, 161:1339--1350, 2015.

\bibitem{boucheron_concentration_2013}
St{\'e}phane Boucheron, G{\'a}bor Lugosi, and Pascal Massart.
\newblock {\em Concentration inequalities: {A} nonasymptotic theory of independence}.
\newblock Oxford University Press, 2013.

\bibitem{cao_biased_2023}
Fei Cao, and Stephanie Reed.
\newblock A biased dollar exchange model involving bank and debt with discontinuous equilibrium.
\newblock {\em arXiv preprint arXiv:2311.07851}, 2023.

\bibitem{cao_binomial_2022}
Fei Cao, and Nicholas F. Marshall.
\newblock From the binomial reshuffling model to Poisson distribution of money.
\newblock {\em Networks and Heterogeneous Media}, 19(1):24--43, 2024.

\bibitem{cao_derivation_2021}
Fei Cao, and Sebastien Motsch.
\newblock Derivation of wealth distributions from biased exchange of money.
\newblock {\em Kinetic \& Related Models}, 16(5):764--794, 2023.

\bibitem{cao_entropy_2021}
Fei Cao, Pierre-Emannuel Jabin, and Sebastien Motsch.
\newblock Entropy dissipation and propagation of chaos for the uniform reshuffling model.
\newblock {\em Mathematical Models and Methods in Applied Sciences}, 33(4):829--875, 2023.

\bibitem{cao_explicit_2021}
Fei Cao.
\newblock Explicit decay rate for the Gini index in the repeated averaging model.
\newblock {\em Mathematical Methods in the Applied Sciences}, 46(4):3583--3596, 2023.

\bibitem{cao_interacting_2022}
Fei Cao, and Pierre-Emannuel Jabin.
\newblock From interacting agents to Boltzmann-Gibbs distribution of money.
\newblock {\em arXiv preprint arXiv:2208.05629}, 2022.

\bibitem{cao_uncovering_2022}
Fei Cao, and Sebastien Motsch.
\newblock Uncovering a two-phase dynamics from a dollar exchange model with bank and debt.
\newblock {\em SIAM Journal on Applied Mathematics}, 83(5):1872--1891, 2023.

\bibitem{chakraborti_statistical_2000}
Anirban Chakraborti, and Bikas~K. Chakrabarti.
\newblock Statistical mechanics of money: how saving propensity affects its distribution.
\newblock {\em The European Physical Journal B-Condensed Matter and Complex Systems}, 17(1):167--170, 2000.

\bibitem{chakraborti_econophysics_2011}
Anirban Chakraborti, Ioane~Muni Toke, Marco Patriarca, and Frédéric Abergel.
\newblock Econophysics review: {I}. {Empirical} facts.
\newblock {\em Quantitative Finance}, 11(7):991--1012, 2011.

\bibitem{chatterjee_pareto_2004}
Arnab Chatterjee,  Bikas~K. Chakrabarti, and Subhrangshu Sekhar Manna.
\newblock Pareto law in a kinetic model of market with random saving propensity.
\newblock {\em Physica A: Statistical Mechanics and its Applications}, 335(1-2):155--163, 2004.

\bibitem{cooper_dispersion_2018}
Colin Coopery, Andrew McDowellz, Tomasz Radzikx, and Nicol{\'a}s Rivera.
\newblock Dispersion processes.
\newblock {\em Random Structures \& Algorithms}, 53(4):561--585, 2018.

\bibitem{cortez_uniform_2022}
Roberto Cortez, and Fei Cao.
\newblock Uniform propagation of chaos for a dollar exchange econophysics model.
\newblock {\em arXiv preprint arXiv:2212.08289}, 2022

\bibitem{cortez_quantitative_2016}
Roberto Cortez, and Joaquin Fontbona.
\newblock Quantitative propagation of chaos for generalized Kac particle systems.
\newblock {\em The Annals of Applied Probability}, 26(2):892--916, 2016.

\bibitem{de_dispersion_2023}
Umberto De Ambroggio, Tam{\'a}s Makai, and Konstantinos Panagiotou.
\newblock Dispersion on the complete graph.
\newblock {\em arXiv preprint arXiv:2306.02474}, 2023.

\bibitem{dragulescu_statistical_2000}
Adrian Dragulescu, and Victor~M. Yakovenko.
\newblock Statistical mechanics of money.
\newblock {\em The European Physical Journal B-Condensed Matter and Complex
  Systems}, 17(4):723--729, 2000.

\bibitem{during_kinetic_2008}
Bertram D{\"u}ring, Daniel Matthes, and Giuseppe Toscani.
\newblock Kinetic equations modelling wealth redistribution: a comparison of approaches.
\newblock {\em Physical Review E}, 78(5):056103, 2008.

\bibitem{frieze_note_2018}
Alan Frieze, and Wesley Pegden.
\newblock A note on dispersing particles on a line.
\newblock {\em Random Structures \& Algorithms}, 53(4):586--591, 2018.

\bibitem{heinsalu_kinetic_2014}
Els Heinsalu, and Patriarca Marco.
\newblock Kinetic models of immediate exchange.
\newblock {\em The European Physical Journal B}, 87(8):1--10, 2014.

\bibitem{jungel_entropy_2016}
Ansgar Jüngel.
\newblock {\em Entropy methods for diffusive partial differential equations}.
\newblock Springer, Vol. 804, 2016.

\bibitem{kutner_econophysics_2019}
Ryszard Kutner, Marcel Ausloos, Dariusz Grech, Tiziana Di~Matteo, Christophe
  Schinckus, and H.~Eugene Stanley.
\newblock Econophysics and sociophysics: {Their} milestones \& challenges.
\newblock {\em Physica A: Statistical Mechanics and its Applications},
  516:240--253, 2019.

\bibitem{lambert_Observationes_1758}
Johann-Heinrich Lambert.
\newblock Observationes variae in mathesin puram.
\newblock {\em Acta Helvetica}, 3(1):128-168, 1758.

\bibitem{lanchier_rigorous_2017}
Nicolas Lanchier.
\newblock Rigorous proof of the Boltzmann–Gibbs distribution of money on connected graphs.
\newblock {\em Journal of Statistical Physics}, 167(1):160--172, 2017.

\bibitem{lanchier_rigorous_2019}
Nicolas Lanchier, and Stephanie Reed.
\newblock Rigorous results for the distribution of money on connected graphs (models with debts).
\newblock {\em Journal of Statistical Physics}, 176(5):1115--1137, 2019.

\bibitem{matthes_steady_2008}
Daniel Matthes, and Giuseppe Toscani.
\newblock On steady distributions of kinetic models of conservative economies.
\newblock {\em Journal of Statistical Physics}, 130(6):1087--1117, 2008.

\bibitem{pereira_econophysics_2017}
Eder Johnson de Area~Leão Pereira, Marcus~Fernandes da~Silva, and HB~de~B.
  Pereira.
\newblock Econophysics: {Past} and present.
\newblock {\em Physica A: Statistical Mechanics and its Applications},
  473:251--261, 2017.

\bibitem{savoiu_econophysics_2013}
Gheorghe Savoiu.
\newblock {\em Econophysics: {Background} and {Applications} in {Economics},
  {Finance}, and {Sociophysics}}.
\newblock Academic Press, 2013.

\bibitem{shang_longest_2020}
Yilun Shang.
\newblock Longest distance of a non-uniform dispersion process on the infinite line.
\newblock {\em Information Processing Letters}, 164:106008, 2020.

\bibitem{sznitman_topics_1991}
Alain-Sol Sznitman.
\newblock Topics in propagation of chaos.
\newblock In {\em Ecole d'été de probabilités de {Saint}-{Flour}
  {XIX}—1989}, pages 165--251. Springer, 1991.

\bibitem{toscani_entropy_1999}
Giuseppe Toscani.
\newblock Entropy production and the rate of convergence to equilibrium for the Fokker-Planck equation.
\newblock {\em Quarterly of Applied Mathematics}, 57(3):521--541, 1999.

\bibitem{villani_review_2002}
Cédric Villani.
\newblock A review of mathematical topics in collisional kinetic theory.
\newblock {\em Handbook of mathematical fluid dynamics}, 1:71--305, 2002.

\end{thebibliography}
\end{document}